\newcommand{\removelatexerror}{\let\@latex@error\@gobble}
\newcommand{\xvbox}[2]{\makebox[#1][l]{#2}} 
\newcommand{\Pis}[1]{\Pi_{\mathrm{state}}(#1)}
\newcommand{\Pit}[1]{\Pi_{\mathrm{traj}}(#1)}
\newcommand{\Pic}[1]{\Pi_{\mathrm{cost}}(#1)}
\let\bbordermatrix\bordermatrix
\patchcmd{\bbordermatrix}{8.75}{4.75}{}{}
\newcommand{\real}{\mathbb{R}}
\newcommand{\realnonnegative}{{\mathbb{R}}_{\ge 0}}
\newcommand{\naturalnumbers}{\mathbb{N}}
\newcommand{\norm}[1]{\ensuremath{\| #1 \|}}
\newcommand{\until}[1]{[\, #1 \,]}
\newcommand{\map}[3]{#1:#2 \rightarrow #3}
\newcommand{\setdef}[2]{\{#1 \; | \; #2\}}
\newcommand{\setmap}[3]{#1:#2 \rightrightarrows #3}
\newcommand{\ones}{\mathbf{1}}
\newcommand{\myemphc}[1]{\emph{#1}} 
\newcommand{\setr}[1]{\{#1\}}
\newcommand{\abs}[1]{|#1|}
\newcommand{\xtraj}{\mathsf{x}}
\newcommand{\utraj}{\mathsf{u}}
\newcommand{\Unsafe}{\mathcal{U}\mathcal{I}}
\newcommand{\JJ}{\mathcal{J}}
\newcommand{\Qo}{\overline{Q}}
\newcommand{\DDo}{\overline{\DD}}
\newcommand{\drmpc}{\mathtt{DR\_MPC}}
\newcommand{\st}{\operatorname{subject \text{$\, \,$} to}}
\renewcommand{\st}{\operatorname{s.t.}}
\newcommand{\Eb}{\mathbb{E}}
\newcommand{\Pb}{\mathbb{P}}
\newcommand{\Db}{\mathbb{D}}
\newcommand{\Data}{\widehat{\mathcal{W}}}
\newcommand{\DD}{\mathcal{D}}
\newcommand{\II}{\mathcal{I}}
\newcommand{\NN}{\mathcal{N}}
\newcommand{\OO}{\mathcal{O}}
\newcommand{\PP}{\mathcal{P}}
\newcommand{\UU}{\mathcal{U}}
\newcommand{\WW}{\mathcal{W}}
\newcommand{\XX}{\mathcal{X}}
\newcommand{\YY}{\mathcal{Y}}
\newcommand{\CVaR}{\operatorname{CVaR}}
\newcommand{\VaR}{\operatorname{VaR}}
\newcommand{\eps}{\epsilon}
\newcommand{\what}{\widehat{w}}
\newcommand{\costgo}[2]{J_{(#1:#2)}}
\newcommand{\jth}{j^{\mathrm{th}}}
\newcommand{\oprocendsymbol}{\hbox{$\bullet$}}
\newcommand{\oprocend}{\relax\ifmmode\else\unskip\hfill\fi\oprocendsymbol}
\newcommand{\longthmtitle}[1]{\mbox{}\textup{\textsl{(#1):}}}
\newcommand{\ifinclude}[1]{}
\newcommand{\thickhline}{
  \noalign {\ifnum 0=`}\fi \hrule height 1pt
  \futurelet \reserved@a \@xhline
}
\newcolumntype{"}{@{\hskip\tabcolsep\vrule width 1pt\hskip\tabcolsep}}
\newtheorem{theorem}{Theorem}[section]
\newtheorem{proposition}{Proposition}[section]
\theoremstyle{definition}
\newtheorem{assumption}{Assumption}[section]
\newtheorem{remark}[theorem]{Remark}
\definecolor{new}{rgb}{0.55,0,0.55}
\title{Data-driven distributionally robust iterative risk-constrained \\ model predictive control} 
\author{Alireza Zolanvari \qquad Ashish Cherukuri \thanks{The authors are with the Engineering and Technology Institute Groningen, University of Groningen. Email: \texttt{\{a.zolanvari,a.k.cherukuri\}@rug.nl}. This work was partly supported with a scholarship from the Data Science and Systems Complexity (DSSC) Center, University of Groningen. }}
\begin{document}
\maketitle
\thispagestyle{empty}
\pagestyle{empty}

\begin{abstract}
	This paper considers a risk-constrained infinite-horizon optimal control problem and proposes to solve it in an iterative manner.
	Each iteration of the algorithm generates a trajectory from the starting point to the target equilibrium state by implementing a distributionally robust risk-constrained model predictive control (MPC) scheme. At each iteration, a set of safe states (that satisfy the risk-constraint with high probability) and a certain number of independent and identically distributed samples of the uncertainty governing the risk constraint are available. These states and samples are accumulated in previous iterations. The safe states are used as terminal constraint in the MPC scheme and samples are used to construct a set of distributions, termed ambiguity set, such that it contains the underlying distribution of the uncertainty with high probability. The risk-constraint in each iteration is required to hold for all distributions in the ambiguity set. We establish that the trajectories generated by our iterative procedure are feasible, safe, and converge asymptotically to the equilibrium. Simulation example illustrates our results for the case of finding a risk-constrained path for a mobile robot in the presence of an uncertain obstacle. 
\end{abstract}

\section{Introduction}\label{sec:intro}

Practical control systems often operate in uncertain environments, for example, a mobile robot navigating in the presence of obstacles. Safe optimal control in such situations can be modeled in many different ways. On the one hand, robust approaches consider the worst-case effect of the uncertainty on control design. On the other hand, popular probabilistic approaches model safety as chance-constraints in the optimal control problem and design deterministic or sample-based algorithms to solve it. A convenient strategy to balance these approaches is to consider appropriate risk constraints. We adopt this approach in our work and define a risk-constrained infinite-horizon optimal control problem. We assume that the task needs to be performed in an iterative way and the data regarding the uncertainty is incrementally revealed as iterations progress. We design an iterative method that combines the notions of learning model predictive control~\cite{UR-FB:17-tac} and distributionally robust risk constraints~\cite{ARH-AC-JL:19-acc}.

\subsubsection*{Literature review}
Optimization problems with worst-case expectation over a set of distributions, either in objective or constraints, is commonly termed as distributionally robust (DR) optimization~\cite{HR-SM:19-arXiv}. The considered set of distributions is referred to as the ambiguity set. The DR framework is particularly attractive when the data regarding uncertainty is less. In this case, the decision-maker can construct the ambiguity set of appropriate size to tune the out-of-sample performance. Thus, DR optimization lends itself as a fitting tool for ensuring safety in uncertain systems. With this motivation, works~\cite{CM-SL:21,JC-JL-FD:21,PC-PP:21,MS-PP:21-arXiv} explore distributional robustness in model predictive control (MPC). Further, recent works~\cite{PS-DH-AB-PP:18,SS-YC-AM-MP:19-tac} investigate risk-averse MPC for Markovian switched systems, while making use of the connection that the so-called coherent risk measure of a random variable is equivalent to the worst-case expectation over a set of distributions.

While most of the above-listed works on MPC consider stochastic systems, we only focus on uncertain environments. This setup finds application in risk-averse motion planning, where our work is related to~\cite{AH-GCK-IY:19,AD-MA-JWB:20}. Here risk constraints encode safety against collision. When only few samples regarding the uncertainty are available,~\cite{AH-IY:20-ICRA,AH-IY:21-arXiv} use distributional robustness to ensure safety. However, none of these works explore the possibility of executing the task in an iterative manner. Such a method is appealing when data regarding the uncertainty is scarce at the beginning and more samples get revealed as the task is done repeatedly. As a consequence, the environment can be explored progressively. The safety ensured in this process can be tuned using DR constraints. To actualize such a method, we make use of the learning model predictive framework introduced in~\cite{UR-FB:17-tac}. Here, at each iteration, a part of the state space is explored and stored for future iterations where these states are used as terminal constraints. In \cite{MB-CV-FB:20}, a learning-based MPC has been developed to tackle the uncertainties in the problem's constraints in a safe procedure. However, these strategies aim at satisfying robust and not risk constraints.

\subsubsection*{Setup and contributions} 
We define an infinite-horizon optimal control problem for a discrete-time deterministic system, where the state is subjected to a conditional value-at-risk constraint. The goal is to take the state from a starting point to a target equilibrium. Our main contribution is the design of the distributionally robust iterative MPC scheme that progressively approximates the solution of the infinite-horizon problem. In our procedure, at each iteration, we generate a trajectory using an MPC scheme, where a DR constrained finite-horizon problem is solved repeatedly. We assume a general class of ambiguity sets that are defined using the data collected in previous iterations. The terminal constraint in the finite-horizon problem enforces the state to lie in a subset of the safe states sampled in previous iterations. Once a trajectory is generated, the samples of the uncertainty collected in the iteration are added to the dataset and the sampled safe set is updated appropriately. 

We establish three properties for our method. Under the assumption that a robustly feasible trajectory is available at the first iteration, we show that each iteration is recursively feasible and safe, where safety means satisfying the risk-constraint with high probability. We prove that each trajectory asymptotically converges to the target state. Lastly, we give conditions under which the set of safe states grows and the cost of trajectory decreases as iterations progress. We apply our algorithm for finding a risk-averse path for a mobile robot in the presence of an uncertain obstacle. 

The attractive aspect of our iterative algorithm is the fact that safety and cost-performance can be tuned using distributional robustness, irrespective of the number of available samples. This feature is quite useful in initial iterations where only few samples of the uncertainty might be available.

\section{Preliminaries}\label{sec:prelims}
Here we collect notation and mathematical background.  
\subsubsection{Notation}\label{subsec:notation}
Let $\real$, $\realnonnegative$, and $\naturalnumbers$ denote the set of real, non-negative real, and natural numbers, resp. The set of natural numbers excluding zero is denoted as $\naturalnumbers_{\ge 1}$. Let $\norm{\cdot}$ and $\norm{\cdot}_1$ denote the Euclidean $2$- and $1$-norm, resp. For $N \in \naturalnumbers$, we denote $[N] := \{0,1,\dots,N\}$. Given $x \in \real$, we let $[x]_+ = \max(x,0)$. Given two sets $X$ and $Y$, a set-valued map $\setmap{f}{X}{Y}$
associates to each point in $X$ a subset of $Y$. The $n$-fold Cartesian product of a set $\mathcal{S}$ is denoted as $\mathcal{S}^n$. 
The $n$-dimensional unit simplex is denoted as $\Delta_{n}$.

\subsubsection{Conditional Value-at-Risk}\label{subsec:cvar}
We review notions on conditional value-at-risk (CVaR) from~\cite{AS-DD-AR:14}. Given a real-valued random variable $Z$ with probability distribution $\Pb$ and $\beta \in (0,1)$, the \myemphc{value-at-risk} of $Z$ at level $\beta$, denoted $\VaR_\beta^{\Pb}[Z]$, is the left-side $(1-\beta)$-quantile of $Z$. Formally, $\VaR_\beta^\Pb [Z]  	 = \inf \setdef{\zeta}{\Pb(Z \le \zeta) \ge 1-\beta}$. 
The \myemphc{conditional value-at-risk (CVaR)} of $Z$ at level $\beta$, denoted $\CVaR_\beta^\Pb [Z]$, is given as 
\begin{align}\label{eq:cvar-def-alt}
	\CVaR_\beta^\Pb [Z] = \inf_{t \in \real} \Bigl\{ t + \beta^{-1} \Eb^\Pb[Z - t]_+ \Bigr\},
\end{align}
where $\Eb^\Pb[\,\cdot\,]$ denotes expectation under $\Pb$. Under continuity of the cdf of $Z$ at $\VaR_\beta^\Pb[Z]$, we have 
	$\CVaR_\beta^\Pb [Z] := \Eb^\Pb[Z \ge \VaR_\beta^\Pb [Z]]$.
The parameter $\beta$ characterizes risk-averseness. When $\beta$ is close to unity, the decision-maker is risk-neutral, whereas, $\beta$ close to the origin implies high risk-averseness.

\section{Problem Statement} \label{sec:problem}

Consider the following discrete-time system:
\begin{equation}\label{sys}
    x_{t+1}  = f(x_t,u_t), 
\end{equation}
where $\map{f}{\real^{n_x} \times \real^{n_u}}{\real^{n_x}}$ defines the dynamics and $x_t\in\real^{n_x}$ and $u_t\in\real^{n_u}$ are the state and control input of the system at time $t$, respectively. The system state and control input are subject to the following deterministic constraints:
\begin{equation}
    x_t\in \XX , u_t\in \UU , \quad  \forall t\geq 0,
\end{equation}
where $\XX$ and $\UU$ are assumed to be \emph{compact} \emph{convex} sets. The aim is to solve an infinite-horizon risk-constrained optimal control problem for system~\eqref{sys} that drives the system to a target equilibrium point $x_F \in \XX$. To that end, let $\map{r}{\XX \times \UU}{\realnonnegative}$ be a continuous function that represents the \emph{stage cost}  associated to the optimal control problem. We assume that 
\begin{align}\label{eq:st-cost}
\begin{cases}
r(x_F, 0) & = 0,\\
r(x, u) & > 0, \quad \forall (x,u) \in (\XX \times \UU) \setminus \{(x_F,0)\}.
\end{cases}
\end{align}
Using this cost function, the risk-constrained infinite-horizon optimal control problem is given as
\begin{subequations}\label{eq:IHOCP}
\begin{align}
	\min\quad  &\sum_{t=0}^{\infty}r(x_{t}, u_{t})\label{eq:IHOCP-obj} 
    \\
    \text{s.t.}  \quad & x_{t+1} = f(x_t,u_t), \quad \forall t\geq 0,\label{eq:IHOCP-a}
    \\
    \quad & x_t \in \XX , u_t\in \UU , \quad  \forall t\geq 0,\label{eq:IHOCP-c}
    \\
    \quad & x_0 = x_S,\label{eq:IHOCP-b}
    \\
    \quad & \CVaR_{\beta}^\mathbb{P}\left[g(x_t,w)\right] \leq \delta, \quad \forall t \geq 0, \label{eq:IHOCP-d}
\end{align}
\end{subequations}
where $x_S \in \XX$ is the initial state and constraint~\eqref{eq:IHOCP-d} represents the risk-averseness. Here, $\CVaR$ stands for the conditional value-at-risk (see Section~\ref{subsec:cvar} for details), $w$ is a random variable with distribution $\Pb$ supported on the compact set $\WW \subset \real^{n_w}$,  $\delta > 0$ is the risk tolerance parameter, 
$\beta > 0$ is the risk-averseness coefficient, and the continuous function $\map{g}{\XX \times \WW}{\real}$  is referred to as the constraint function. The constraint~\eqref{eq:IHOCP-d} ensures that the risk associated to the state at any time, as specified using the random function $g$, is bounded by a given parameter $\delta$. More generally, the constraint can be perceived as a safety specification for system~\eqref{sys} under uncertain environments. 

The infinite-horizon problem~\eqref{eq:IHOCP} is difficult to solve in general due to state, input, and risk constraints. Besides, in practice, the distribution $\Pb$ is usually unknown beforehand. To tackle these challenges, we propose a data-driven iterative MPC scheme outlined in the following section.

\section{Distributionally Robust Risk-constrained Iterative MPC}
In this section, we provide an iterative strategy for solving the infinite-horizon optimal control problem~\eqref{eq:IHOCP} in an approximate manner. Here, each iteration refers to an execution of the control task, that is, taking the system state from $x_S$ to $x_F$ in a safe manner. Our iterative framework is inspired by~\cite{UR-FB:17-tac} and roughly proceeds in the following manner. At the start of any iteration $j$, we have access to a finite number of samples of the uncertainty, a set of safe states, and the cost it takes to go from each of these safe states to the target. In iteration $j$, we use this prior knowledge and define an MPC scheme that constructs a safe trajectory starting at $x_S$ and ending at $x_F$. The aim of this newly generated trajectory is to possibly reduce the cost or improve the safety as compared to the previous iterations. At the end of the iteration, we update the dataset with samples gathered along the execution of the MPC scheme. Subsequently, we update the set of safe states. In the following, we make all the necessary ingredients of the iterative framework precise and later put them together in the form of  Algorithm~\ref{ag:DR_iteration}. 

\subsection{Components of the Iterative Framework}

\subsubsection{Trajectories}
Every iteration results into a trajectory. The system state and the control input at time $t$ of the $\jth$ iteration are denoted as $x_t^j$ and $u_t^j$, respectively, and the $\jth$ \emph{trajectory} is given by concatenated sets:
\begin{equation}\label{cl-traj}
	\begin{split}
		\xtraj^j& :=[x_0^j, x_1^j, \dots, x_t^j, \dots, x_{T_j}^j],\\
		\utraj^j& :=[u_0^j, u_1^j, \dots, u_t^j, \dots, u_{T_j - 1}^j].
	\end{split}
\end{equation}
We assume that all trajectories start from $x_S$, that is, $x_0^j = x_S$ for all $j \geq 1$. While our objective is to solve an infinite-horizon problem~\eqref{eq:IHOCP}, for practical considerations, we aim to find trajectories that reach the target $x_F$ in a finite number of steps. Thus, we assume that for each iteration $j$, the \emph{length} of the trajectory is finite, denoted by 
$T_j \in \naturalnumbers_{\ge1}$. Throughout the paper, whenever we mention trajectory of states, we implicitly mean that there exists a feasible control sequence that makes this trajectory of states possible. 

\subsubsection{Data and Ambiguity Sets}
At the start of iteration $j$,  a \emph{dataset} $\Data^{j-1} := \setr{\what_1, \dots, \what_{N_{j-1}}} \subset \WW$ of $N_{j-1}$ i.i.d. samples of the uncertainty $w$ drawn from $\Pb$ is available. Here, the index $j-1$ indicates the samples collected till iteration $j-1$. We assume that we collect one sample per time-step of each iteration and so the number of samples available for iteration $j+1$ are $N_j = N_{j-1} + T_j$. Our aim is to use the dataset $\Data^{j-1}$ to enforce the risk constraint~\eqref{eq:IHOCP-d} in an appropriate sense for the trajectory generated in the $\jth$ iteration. To this end, we adopt a distributionally robust approach. That is, we generate a set of distributions, termed \emph{ambiguity set}, that contains the underlying distribution $\Pb$ with high probability. We then enforce the risk constraint~\eqref{eq:IHOCP-d} for all distributions in the ambiguity set. To put the notation in place, assume that given a \emph{confidence parameter} $\zeta \in (0,1)$, we have access to a map   $\setmap{\Db}{\WW_\infty}{\PP(\WW)}$ such that given any set of $N$ i.i.d samples $\Data_N = \setr{\what_1, \dots ,\what_N}$ the set of distributions $\Db(\Data_N)$ contains $\Pb$ with confidence $\zeta$. In the definition of the map, the domain is $\WW_\infty = \cup_{i=1}^\infty \WW^i$ and $\PP(\WW)$ denotes the set of all distributions supported on $\WW$. We assume that $\Db$ always leads to a closed and nonempty ambiguity set.  We term $\Db$ as the \emph{ambiguity set generating map}. Given $\Db$, our strategy is to set the ambiguity set used for iteration $j$ as $\DD^{j-1}:=\Db(\Data^{j-1})$. The assumption on $\Db$ imply that $\DD^{j-1}$ is \emph{$(\zeta,\Pb^{\abs{\Data^{j-1}}})$-reliable}, that is, 
\begin{align}\label{eq:amb_def-n}
	\Pb^{\abs{\Data^{j-1}}} \left( \Pb\in\DD^{j-1} \right)\geq \zeta.
\end{align}
The above property implies that for the MPC scheme related to the $\jth$ iteration, if we impose the risk constraint~\eqref{eq:IHOCP-d} for all distributions in $\DD^{j-1}$, then the generated trajectory will satisfy the risk constraint with at least probability $\zeta$. Ideally, we must aim to find trajectories that satisfy~\eqref{eq:IHOCP-d}. However, when only limited data regarding the uncertainty is known, one can only enforce such a constraint in a probabilistic manner and the above definition aims to capture this feature. 

\subsubsection{Cost-to-go}
The cost-to-go from time $t$ for the trajectory $(\xtraj^j,\utraj^j)$ generated in iteration $j$, is denoted as:
\begin{align}\label{eq:to-go}
	\costgo{t}{\infty}^j &:= \textstyle \sum_{k=t}^\infty r(x_k^j, u_k^j).
\end{align}
Setting $t=0$ in~\eqref{eq:to-go} gives us the cost of the $\jth$ iteration as $\costgo{0}{\infty}^j$, that measures the performance of the controller in that iteration.
For every time-step $t \ge T_j$, we assume that the system remains at $x_F$ and the control input is zero. Thus, the infinite sum in~\eqref{eq:to-go} is well-defined due to~\eqref{eq:st-cost}.

\subsubsection{Sampled safe set}
The main advantage of the iterative scheme is that it allows data to be gathered and state-space to be explored in an incremental manner. That is, we keep track of all samples from past iterations (discussed above) and we also maintain a set of safe states (along with the minimum cost that it takes to go to the target from them) that were visited in the previous iterations. These safe states are used in an iteration as terminal constraints in the MPC scheme (as proposed in~\cite{UR-FB:17-tac}).

In iteration $j$, the risk constraint~\eqref{eq:IHOCP-d} is imposed for all distributions in $\DD^{j-1}$ in the finite-horizon optimal control problem solved in the MPC scheme (see Section~\ref{sec:dr-finite-horizon}). Thus, due to~\eqref{eq:amb_def-n}, the trajectory $(\xtraj^j,\utraj^j)$ is \emph{$(\zeta,\Pb^{\abs{\Data^{j-1}}})$-safe}, that is 
\begin{align}\label{eq:traj-safety}
	\Pb^{\abs{\Data^{j-1}}} \left( \CVaR_{\beta}^\Pb \left[g(x^j_t,w)\right] \le \delta\right) \ge \zeta
\end{align}
for all $t \in [T_j]$. Note that $\xtraj^j$ is safe with respect to the dataset $\Data^{j-1}$. However, since the next iteration $j+1$ is built considering safety with respect to the dataset $\Data^j$, all previously generated trajectories need to be $(\zeta,\Pb^{\abs{\Data^{j}}})$-safe to be considered as the set of safe states in iteration $j+1$. In particular, the \emph{sampled safe set} obtained at the end of iteration $j$ and to be used in iteration $j+1$, denoted $\mathcal{S}^{j} \subseteq \until{j} \times \XX \times \realnonnegative$, is defined recursively as
\begin{align}\label{eq:SS-update-gen}
	\mathcal{S}^{j} = \mathbb{S}^j \Bigl(\mathcal{S}^{j-1} \cup \setr{(j,x_t^{j},\costgo{t}{\infty}^{j})}_{t=1}^{T_j} \Bigr).  
\end{align}
In the above expression, the set $\setr{(j,x_t^{j},\costgo{t}{\infty}^{j})}_{t=1}^{T_j}$ collects the set of states visited in iteration $j$, along with the associated cost-to-to. The counter $j$ is maintained in this set to identify the iteration to which a state with a particular cost-to-go is associated with. The set $\mathcal{S}^{j-1}$ is the sampled safe set used in iteration $j$. The map $\mathbb{S}^j$ only keeps the states that are safe with respect to the new data set $\Data^j$. This aspect of our method is different from~\cite{UR-FB:17-tac} where explored sates are safe for all future iterations. The exact procedure that defines $\mathbb{S}^j$ is given in our algorithm.

For ease of exposition, we define maps $\Pit{\cdot}$, $\Pis{\cdot}$, and $\Pic{\cdot}$, such that, given a safe set $\mathcal{S}$,  $\Pit{\mathcal{S}}$, $\Pis{\mathcal{S}}$, and $\Pic{\mathcal{S}}$ return the set of all trajectory indices, states, and cost-to-go values that appear in $\mathcal{S}$, respectively.  The following assumption is required to initialize our iterative procedure with a nonempty sampled safe set. 

\begin{assumption}\longthmtitle{Initialization with robust trajectory}\label{assump:safeset}
	Before starting the first iteration, the sampled safe set $\mathcal{S}^0$ contains the states of a finite-length robustly safe trajectory $\xtraj^0$ that starts from $x_S$ and reaches $x_F$. This means that the trajectory $\xtraj^0$ in $\mathcal{S}^0$ robustly satisfies all constraints of problem~\eqref{eq:IHOCP}, that is, 
		$x\in\XX,\,g(x, w) \leq \delta$ for all $w \in \WW$,
	and all $x \in \Pis{\mathcal{S}^0}$.\oprocend
\end{assumption}

\subsubsection{Minimum Cost-to-go}

The sampled safe set $\mathcal{S}^j$ keeps track of the cost-to-go associated with each state in the set. However, a state can appear in more than one trajectory.  For such cases, we need to maintain the minimum cost-to-go associated with a state. To this end, given the safe set $\mathcal{S}^j$ obtained at the end of iteration $j$, we define the associated minimum cost-to-go map as 

\begin{align}\label{eq:Qj}
	Q^j(x):=\begin{cases}
		\min\limits_{J\in F^j(x)}J, &\quad x\in\Pis{\mathcal{S}^j},
		\\
		+\infty, &\quad x\notin\Pis{\mathcal{S}^j},
	\end{cases}
\end{align}
where
\begin{align}\label{def:Fj}
	F^j(x) = \setdef{\costgo{t}{\infty}^i }{ \Pis{\left\{(i, x^i_t, \costgo{t}{\infty}^i)\right\}} = \{x\},&\nonumber
		\\ 
		(i, x^i_t, \costgo{t}{\infty}^i) \in \mathcal{S}^j}&.
\end{align}
In the above expression the set $F^j(x)$ collects all cost-to-go values associated to the state $x \in \Pis{\mathcal{S}^j}$.  The function $Q^j$ then finds the minimum among these.

\subsubsection{DR Risk-constrained Finite-Horizon Problem}\label{sec:dr-finite-horizon}
Given the above described elements, we now present the finite-horizon optimal control problem solved at each time-step of each iteration. For generality, we write the problem for generic current state $x$, sampled safe set $\overline{\mathcal{S}}$, and ambiguity set $\DDo$. Let $K \in \naturalnumbers_{\ge 1}$ be the length of the horizon and consider

\begin{equation}\label{eq:DR-RLMPC:main}
	\mathcal{J}_{(\overline{\mathcal{S}}, \DDo)} (x) := \begin{cases}
		\min & \, \,  \sum_{k=0}^{K-1}r(x_{k}, u_{k}) +\overline{Q}(x_{K}) 
		\\
		\st & \, \, x_{k+1} = f(x_{k},u_{k}),   \forall k\in[K-1],
		\\
		& \, \, x_{k} \in\XX, u_{k}\in\UU,   \forall k\in[K-1],
		\\
		& \, \, x_{0}=x,
		\\
		& \, \, x_{K}\in\Pis{\overline{\mathcal{S}}}, 
		\\
		& \, \, \sup_{\mu\in\overline{\DD}} \left[\CVaR_{\beta}^{\mu}\left[g(x_{k},w)\right]\right]\leq \delta, 
		\\
		& \qquad \qquad \qquad \qquad \forall k\in[K-1],
	\end{cases}
\end{equation}
where $\map{\overline{Q}}{\XX}{\real}$ gives the minimum cost-to-go for all states in $\overline{\mathcal{S}}$ and is calculated in a similar manner as in~\eqref{eq:Qj}. The decision variables in the above problem are $(x_0, x_1, \dots, x_K)$ and $(u_0, u_1, \dots, u_{K-1})$. The set $\overline{\mathcal{S}}$ defines the terminal constraint $x_K \in \Pis{\overline{\mathcal{S}}}$. Finally, note that the risk constraint is required to hold for all distributions in the ambiguity set $\overline{\DD}$. Thus, we refer to it as \emph{distributionally robust (DR) constraint}.  For iteration $j$ and time-step $t$, the MPC scheme solves the finite-horizon problem~\eqref{eq:DR-RLMPC:main} with $x = x_t^j$, $\overline{\mathcal{S}} = \mathcal{S}^{j-1}$, $\DDo = \DD^{j-1}$, and $\overline{Q} = Q^{j-1}$.

\subsection{The Iterative Framework}
Here, we compile the elements described in the previous section and present our iterative procedure termed \emph{distributionally robust risk-constrained iterative MPC (DR-RC-Iterative-MPC)}. The informal description is given below. 
\begin{quote}
	\emph{[Informal description of Algorithm \ref{ag:DR_iteration}]:}
	Each iteration $j \ge 1$ starts with a sampled safe set $\mathcal{S}^{j-1}$ and an ambiguity set $\DD^{j-1}$. The ambiguity set is constructed (see Line~\ref{ln:ambiguity}) using samples in dataset $\Data^{j-1}$ collected in previous iterations and the map $\Db$ that ensures~\eqref{eq:amb_def-n}. In the first step of the iteration (Line~\ref{ln:drmpc}), a trajectory $(\xtraj^j,\utraj^j)$ is generated by the $\mathtt{DR\_MPC}$ routine (described in Algorithm~\ref{ag:DR_MPC}) to which the  sampled safe set $\mathcal{S}^{j-1}$ and the ambiguity set $\DD^{j-1}$ are given as inputs. This trajectory is $(\zeta,\Pb^{\abs{\Data^{j-1}}})$-safe, that is, it satisfies~\eqref{eq:traj-safety}. The samples collected in iteration $j$ are appended to the dataset $\Data^{j-1}$ in Line~\ref{ln:drmpc} and the ambiguity set for the next iteration is constructed in Line~\ref{ln:ambiguity}. The trajectory $\xtraj^j$ along with its associated cost-to-go is appended to the sampled safe set in Line~\ref{ln:safe}. In Lines~\ref{ln:uns-traj} to~\ref{ln:ss-update}, the sampled safe set $\mathcal{S}^{j-1}$ is modified to make sure that it only contains trajectories that are $(\zeta,\Pb^{\abs{\Data^{j}}})$-safe. These steps collectively represent the map $\mathbb{S}$ defined in an abstract manner in~\eqref{eq:SS-update-gen}. 	
	 
	 The indices of trajectories present in $\mathcal{S}^{j-1}$ are maintained in the set $\II^{j-1}$. In Line~\ref{ln:uns-traj}, trajectories in $\II^{j-1} \cup \{j\}$ for which at least one state is not  $(\zeta,\Pb^{\abs{\Data^{j}}})$-safe are enumerated in the set $\Unsafe^{j}$. Accordingly, in Line~\ref{ln:M}, the set $\II^j$ is updated as trajectories in $\II^{j-1} \cup \{j\}$ that are not in $\Unsafe^j$. The states visited in these trajectories are stored in $\mathcal{S}^j$ in Line~\ref{ln:ss-update}. Finally, the minimum cost-to-go for all states in $\mathcal{S}^j$ is updated in Line~\ref{ln:Q}
\end{quote}
Note that in the above algorithm, the sampled safe set is updated in an iterative way. That is, we add the $\jth$ trajectory to $\mathcal{S}^{j-1}$ and then check safety with respect to the dataset $\Data^{j}$. In the process, we loose some trajectories in iterations $\{1,\dots, j-1\}$ that could have been $(\zeta,\Pb^{\abs{\Data^{j}}})$-safe. An alternative way would be to store all previous trajectories and check for their safety in each iteration. However, such a process would potentially slow down the algorithm.

\begin{algorithm}[htb]
	\SetAlgoLined
	\DontPrintSemicolon
	\SetKwInOut{Input}{Input}
	\SetKwInOut{Output}{Output}
	\SetKwInOut{init}{Initialize}
	\SetKwInOut{giv}{Data}
	\SetKwInOut{params}{Parameter}
	\Input{
		\xvbox{2mm}{$\mathcal{S}^0$}\quad--$\;$Initial sampled safe set \\
		\xvbox{2mm}{$\Data^0$}\quad--$\;$Initial set of samples \\
		\xvbox{2mm}{$\II^0$}\quad--$\;$Index of trajectory in $\mathcal{S}^0$
		\\
	}
	\init{
		$j \gets 1$, $\DD^0 = \Db(\Data^0)$, $\Unsafe^{0}\gets\emptyset$}
	\While{$j > 0$ }
	{
		Set $(\xtraj^{j}, \utraj^{j})\gets \mathtt{DR\_MPC}(\mathcal{S}^{j-1}, \DD^{j-1})$; $T^j\gets\mathtt{length}(\xtraj^{j})$; $\Data^{j} \gets \Data^{j-1} \cup \{\what_i\}_{i=1}^{T^j}$ \label{ln:drmpc} \;
		Set $\DD^j \gets\Db(\Data^j)$ \label{ln:ambiguity}\; 
		Set $\mathcal{S}^{j-1}\gets\mathcal{S}^{j-1} \cup \{(j, x_t^j, J^j_{t\rightarrow\infty})\}_{t=1}^{T_j}$ \label{ln:safe}\;
			Set $\Unsafe^j \gets \{ i \in (\II^{j-1}\cup \{j\} ) \, |(i, x, J) \in \mathcal{S}^{j-1},$ $\sup\limits_{\mu \in \DD^{j}} \left[\CVaR_{\beta}^{\mu} \left[g(x,w)\right]\right] > \delta \}$ \label{ln:uns-traj} 
\;
		Set $\II^{j} \gets (\II^{j-1}\cup \{j\} ) \setminus \Unsafe^{j}$ \label{ln:M} \;
		Set $\mathcal{S}^{j} \gets \setdef{(i, x, J) \in \mathcal{S}^{j-1}}{i \in \II^{j}}$ \label{ln:ss-update}\;
		Compute $Q^j(x)$ for all $x\in\Pis{\mathcal{S}^j}$ using~\eqref{eq:Qj}\label{ln:Q}\;
		Set $j\gets j+1$
	}
	
	\caption{DR-RC-Iterative-MPC} 
	\label{ag:DR_iteration} 
\end{algorithm}

Algorithm~\ref{ag:DR_iteration} calls the $\drmpc$ routine in each iteration to generate the trajectory. This procedure is given in Algorithm~\ref{ag:DR_MPC} and informally described below.
\begin{quote}
	\emph{[Informal description of Algorithm~\ref{ag:DR_MPC}]:} 
	The procedure generates a trajectory from $x_S$ to $x_F$ given a sampled safe set $\overline{\mathcal{S}}$ and an ambiguity set $\overline{\DD}$. The minimum cost-to-go function $\Qo$ is computed for $\overline{\mathcal{S}}$ using~\eqref{eq:Qj}. At time-step $t$, problem~\eqref{eq:DR-RLMPC:main} is solved with $x = x_t$. We denote the optimal solution by 
	\begin{equation}\label{eq:optSol}
		\begin{split}
			x_{\mathrm{vec},t}^{*} &= [x_{t|t}^{*}, \dots , x_{t+K|t}^{*}], 
			\\
			u_{\mathrm{vec},t}^{*} &= [u_{t|t}^{*}, \dots , u_{t+K-1|t}^{*}],
		\end{split}
	\end{equation}
	where we use the notation that $x_{t+k|t}$ represents the prediction made at time $t$ regarding the state at time $t+k$. The control action at time $t$ is set as the first element of $u_{\mathrm{vec},t}^{*}$ (Line~\ref{ln:control}) and it is appended to the trajectory $\utraj$. The state is updated and added to $\xtraj$ in Line~\ref{ln:state}. The procedure moves to the next time step with the updated state as $x_{t+1}$. 
\end{quote}

\begin{algorithm}[htb]
	\SetAlgoLined
	\DontPrintSemicolon
	\SetKwInOut{Input}{Input}
    \SetKwInOut{Output}{Output}
	\SetKwInOut{init}{Initialize}
	\SetKwInOut{giv}{Data}
	\SetKwInOut{params}{Parameter}
	\SetKwProg{Fn}{Function}{:}{}
	\SetKwFunction{FMain}{$\mathtt{DR\_MPC}$}
    \Fn{\FMain{$\overline{\mathcal{S}}, \DDo$}}{
    	\init{
    		$t\gets0$; $x_0\gets x_S$; $\xtraj \gets[x_0]$, $\utraj \gets[\,\,]$
    	}
    Set $\overline{Q}$ as minimum cost-to-go in $\overline{\mathcal{S}}$ (use~\eqref{eq:Qj})\;
    \While{$x_t\neq x_F$}{
        Solve~\eqref{eq:DR-RLMPC:main} with $x=x_t$ and obtain optimal solutions $x_{\mathrm{vec},t}^{*}$ and $u_{\mathrm{vec},t}^{*}$\;
        Set $u_t\gets u^{*}_{t|t}$; $\utraj \gets[u, u_t]$ \label{ln:control}\;
        Set $x_{t+1} \! \gets  \! f(x_t,u_t) $; $\xtraj \! \gets \! [x, x_{t+1}]$; $t \! \gets  \! t+1$ \label{ln:state}\;
        }
    \textbf{return} $(\xtraj, \utraj)$
    }
    \textbf{end}
    \caption{Distributionally robust MPC function}
    \label{ag:DR_MPC}
\end{algorithm}

The above explained MPC procedure might not terminate in finite time, thus possibly violating our assumption that all trajectories have finite length. To practically overcome this issue, we terminate the MPC scheme when the state reaches a neighborhood of the equilibrium $x_F$.

\begin{remark}\longthmtitle{Tractability}
	Note that, if $g(\cdot,w)$ is convex for every $w \in \WW$ and~\eqref{sys} is a linear system, then the risk-constraint in the infinite-horizon problem~\eqref{eq:IHOCP-d} as well as the DR risk-constraint in~\eqref{eq:DR-RLMPC:main} are convex. As a result of the latter fact, all points in the convex hull of $\Pis{\overline{\mathcal{S}}}$ satisfy the DR risk-constraint. Hence, we can replace $\Pis{\overline{\mathcal{S}}}$ with its convex hull and define the minimum cost-to-go function using Barycentric functions (see~\cite{UR-FB:17-ifac}) in the problem~\eqref{eq:DR-RLMPC:main} without affecting the safety of the resulting trajectory. By doing so, all constraints in problem~\eqref{eq:DR-RLMPC:main} are convex which eases the computational burden of solving the problem. \oprocend
\end{remark}

\begin{remark}\longthmtitle{Ambiguity sets}\label{re:ambiguity}
	The definition of the ambiguity set in our algorithm is quite general, defined using an arbitrary map $\Db$. Popular choices of data-based ambiguity sets are the ones using distance metrics such as Wasserstein, KL-divergence, $\phi$-divergence or using moment information, see~\cite{HR-SM:19-arXiv} for a survey. The reliability guarantee~\eqref{eq:amb_def-n} for a particular choice of ambiguity set is ensured by concentration of measure results. Each class of ambiguity set comes with its own pros and cons and usually one needs to seek a balance between (a) guaranteed statistical performance, (b) generality of distributions that can be handled, and (c) computational effort for handling the DR constraint in the finite-horizon problem~\eqref{eq:DR-RLMPC:main}.  We wish to explore this further in future. \oprocend 
\end{remark}

\begin{remark}\longthmtitle{Safety vs cost-performance}\label{re:tradeoff}
	The reliability parameter $\zeta$ in our framework is tunable. Meaning, if one requires high level of safety when exploring the state-space, then $\zeta$ can be selected close to unity. In that case, the ambiguity set needs to be large enough to ensure~\eqref{eq:amb_def-n} and so the DR risk-constraint turns out to be conservative. Analogously, if cost improvement is the goal, then a low value of $\zeta$ will be sufficient. Note that safety can alternatively be tuned by changing the risk-averseness parameter $\beta$. A lower value of $\beta$ would ensure more safety.  \oprocend
\end{remark}

\section{Properties of DR-RC-Iterative-MPC}

We first establish recursive feasibility of our iterative procedure given in Algorithm~\ref{ag:DR_iteration}. We also state the safety guarantee with which each trajectory is generated.

\begin{proposition}\label{prop:t-t+1}\longthmtitle{Safety and recursive feasibility of DR-RC-Iterative-MPC}
	Let Assumption~\ref{assump:safeset} hold. Then, at each iteration $j\ge1$ and time-step $t \ge 0$, the finite-horizon problem~\eqref{eq:DR-RLMPC:main} with $x = x_t^j$, $\overline{\mathcal{S}} = \mathcal{S}^{j-1}$, and $\DDo = \DD^{j-1}$ solved in the DR-RC-Iterative-MPC scheme is feasible. Further, the generated trajectory $(\xtraj^j,\utraj^j)$ is $(\zeta,\Pb^{\Data^{j-1}})$-safe.
\end{proposition}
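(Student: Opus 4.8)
\emph{Proof strategy.}
The plan is to prove feasibility by a nested induction --- outer on the iteration index $j$, inner on the time-step $t$ --- and then to deduce safety directly from the reliability property~\eqref{eq:amb_def-n}. The structural fact that drives the outer induction is that the initial robustly safe trajectory $\xtraj^0$ is \emph{never} removed from the sampled safe set. Indeed, for any $x\in\Pis{\mathcal{S}^0}$ Assumption~\ref{assump:safeset} gives $g(x,w)\le\delta$ for all $w\in\WW$, so evaluating the infimum in~\eqref{eq:cvar-def-alt} at $\delta$ yields $\CVaR_\beta^\mu[g(x,w)]\le\delta+\beta^{-1}\Eb^\mu[g(x,w)-\delta]_+=\delta$ for every distribution $\mu$ supported on $\WW$, hence for every $\mu$ in any ambiguity set $\DD^{j}$. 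Therefore index $0$ is excluded from $\Unsafe^{j}$ in Line~\ref{ln:uns-traj} at every iteration, so $0\in\II^{j}$ for all $j$ and $\mathcal{S}^{j}$ always contains all states of $\xtraj^0$, which still links $x_S$ to $x_F$. The same line of Algorithm~\ref{ag:DR_iteration} shows that \emph{every} state $x$ surviving into $\mathcal{S}^{j-1}$ satisfies $\sup_{\mu\in\DD^{j-1}}\CVaR_\beta^\mu[g(x,w)]\le\delta$; I will use this for the terminal states of the MPC candidates.

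\emph{Base case of the inner induction.}
Fix $j\ge1$ and consider~\eqref{eq:DR-RLMPC:main} at $t=0$, i.e.\ with $x=x_S$, $\overline{\mathcal{S}}=\mathcal{S}^{j-1}$, $\DDo=\DD^{j-1}$. Since $\mathcal{S}^{j-1}$ contains $\xtraj^0$ (Assumption~\ref{assump:safeset} for $j=1$, the observation above for $j\ge2$), I build a feasible point from the initial portion of $\xtraj^0$: if $\xtraj^0$ has length at least $K$, take its first $K+1$ states together with the matching inputs; otherwise take $\xtraj^0$ padded at the end with $x_F$ and zero inputs, which is consistent since the system rests at $x_F$ under zero input. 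In either case the candidate obeys the dynamics, stays in $\XX\times\UU$, starts at $x_S$, terminates at a state of $\xtraj^0$ lying in $\Pis{\mathcal{S}^{j-1}}$, and satisfies the DR risk constraint at every stage because each of its states is robustly safe and the first-paragraph computation then applies with $\mu$ ranging over $\DD^{j-1}$.

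\emph{Inductive step.}
Suppose~\eqref{eq:DR-RLMPC:main} at time $t$ of iteration $j$ is feasible with optimizer as in~\eqref{eq:optSol}, so $x_{t+1}^j=x_{t+1|t}^{*}$. I form the standard shift-and-append candidate for time $t+1$: the states $x_{t+1|t}^{*},\dots,x_{t+K|t}^{*},\tilde x$ and the inputs $u_{t+1|t}^{*},\dots,u_{t+K-1|t}^{*},\tilde u$, where the appended pair follows the trajectory recorded in $\mathcal{S}^{j-1}$ through the terminal state, namely $(\tilde u,\tilde x)=(u_\ell^i,x_{\ell+1}^i)$ when $x_{t+K|t}^{*}=x_\ell^i$ with $\ell<T_i$ for some retained index $i$, and $(\tilde u,\tilde x)=(0,x_F)$ when $x_{t+K|t}^{*}=x_F$. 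By construction the dynamics and the $\XX\times\UU$ constraints hold, $\tilde x\in\Pis{\mathcal{S}^{j-1}}$, and the DR constraint holds at every stage: $x_{t+1|t}^{*},\dots,x_{t+K-1|t}^{*}$ satisfied it in the problem at time $t$, and $x_{t+K|t}^{*}\in\Pis{\mathcal{S}^{j-1}}$ satisfies it by the first paragraph. Hence the problem at time $t+1$ is feasible, which closes the inner induction. Since trajectories have finite length by assumption, iteration $j$ runs to completion, $\mathcal{S}^{j}$ is formed with $0\in\II^{j}$, and the outer induction continues.

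\emph{Safety.}
For $t\in\{0,\dots,T_j-1\}$ the problem solved at that step forces, through its $k=0$ instance with $x_0=x_t^j$, the bound $\sup_{\mu\in\DD^{j-1}}\CVaR_\beta^\mu[g(x_t^j,w)]\le\delta$; for $t=T_j$ the bound holds because $x_{T_j}^j=x_F$ is robustly safe. On the event $\{\Pb\in\DD^{j-1}\}$ in the sample space of $\Data^{j-1}$ this implies $\CVaR_\beta^\Pb[g(x_t^j,w)]\le\delta$, so by~\eqref{eq:amb_def-n} the probability of the latter is at least $\zeta$ for every $t\in[T_j]$, which is exactly~\eqref{eq:traj-safety}. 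The step I expect to be the main obstacle is the outer-induction bookkeeping: because the ambiguity set enlarges from one iteration to the next, trajectories produced earlier can be pruned from the sampled safe set, so the argument relies on isolating a single object --- the robust trajectory $\xtraj^0$ --- that is guaranteed never to be discarded, keeps $x_S$ connected to $x_F$ inside $\mathcal{S}^{j-1}$, and supplies the successor states needed by the shifting argument; the within-iteration step is the routine MPC shifting construction.
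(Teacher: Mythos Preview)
Your proof is correct and follows essentially the same approach as the paper: establish that $\xtraj^0$ persists in every $\mathcal{S}^{j-1}$ via its robust safety, use it for the $t=0$ base case, apply the standard shift-and-append construction for the inductive step, and invoke reliability~\eqref{eq:amb_def-n} for safety. If anything, you are more careful than the paper in explicitly justifying why $\xtraj^0$ survives every pruning step (via the $\CVaR$ bound) and why the terminal state $x_{t+K|t}^{*}\in\Pis{\mathcal{S}^{j-1}}$ satisfies the DR constraint with respect to $\DD^{j-1}$---facts the paper uses implicitly.
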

\begin{proof}
	By Assumption~\ref{assump:safeset}, $\mathcal{S}^0$ contains a finite-length robustly safe trajectory from $x_S$ to $x_F$, denoted as, 
	\begin{align*}
		\xtraj^0 := [x_0^0, x_1^0, \dots, x_{T_0}^0], \, \text{ and } \, 
		\utraj^0 := [u_0^0, u_1^0, \dots, u_{T_0 - 1}^0],
	\end{align*} 
	where $T_0$ is the length of this trajectory. The update step for the sampled safe set (Line~\ref{ln:safe} and~\ref{ln:ss-update}) implies that 
		$\mathcal{S}^0 \subseteq \mathcal{S}^j$, for all $j\geq 1$. 
	Thus, at each iteration $j\geq1$ and $t = 0$, the first $K$ elements of $(\xtraj^0,\utraj^0)$ are valid feasible solutions to~\eqref{eq:DR-RLMPC:main}, where $x = x_S$ and $(\overline{\DD},\overline{\mathcal{S}}) = (\DD^{j-1},\mathcal{S}^{j-1})$.
	Our next step is to show that, for each iteration, feasibility at time $t$ implies feasibility at time $t+1$. The proof then follows by induction.
	
	Assume that the optimization problem~\eqref{eq:DR-RLMPC:main} is feasible at iteration $j$ and time $t$ for $x = x_t^j$. Denote the optimizer as
	\begin{equation}\label{eq:optSol-j}
			\begin{split}
				x_{\mathrm{vec},t}^{*,j} &= [x_{t|t}^{*,j}, \dots , x_{t+K|t}^{*,j}],
				\\
				u_{\mathrm{vec},t}^{*,j} &= [u_{t|t}^{*,j}, \dots , u_{t+K-1|t}^{*,j}].
			\end{split}
	\end{equation}
	By applying the first element of $u_{\mathrm{vec},t}^{*,j}$ to system~\eqref{sys}, the new state is determined as $x_{t+1}^j = f(x_{t}^j, u_{t|t}^{*,j})$. Moreover, we have $x_{t+1}^j = x_{t+1|t}^{*,j}$. 
	Due to constraints in~\eqref{eq:DR-RLMPC:main}, we have $x_{t+K|t}^{*,j} \in \mathcal{S}^{j-1}$. Recall that due to Line~\ref{ln:ss-update} of the algorithm, $\mathcal{S}^{j-1}$ contains trajectories that are $(\zeta,\Pb^{\Data^{j}})$-safe. Thus, there exists a trajectory starting at $x_{t+K|t}^{*,j}$ given as $\tilde{\xtraj} =  [x_{t+K|t}^{*,j}, \tilde{x}_{K+1}, \dots, \tilde{x}_{K+T}]$, $\tilde{\utraj} = [\tilde{u}_{K}, \tilde{u}_{K+1}, \dots, \tilde{u}_{K+T-1}]$, 
	such that $\tilde{x}_{K+T} = x_F$ and all points in $\tilde{\xtraj}$ are in $\Pis{\mathcal{S}^{j-1}}$. Using the above trajectory, the following one is a feasible solution to~\eqref{eq:DR-RLMPC:main} for time-step $t+1$, that is, when $x=x_{t+1}^j$: 
	\begin{align*}
		& [x_{t+1|t}^{*,j}, x_{t+2|t}^{*,j}, \dots, x_{t+K|t}^{*,j}, \tilde{x}_{K+1}], 
		\\
		& [u_{t+1|t}^{*,j}, u_{t+2|t}^{*,j}, \dots, u_{t+K-1|t}^{*,j}, \tilde{u}_{K}].
	\end{align*}
	This completes the proof of the first part. The safety of the trajectory $(\xtraj^j,\utraj^j)$ follows from the constraint in~\eqref{eq:DR-RLMPC:main} and the reliability assumption on the ambiguity set~\eqref{eq:amb_def-n}.
\end{proof}

Next, we show that each trajectory generated by Algorithm~\ref{ag:DR_MPC} converges asymptotically to $x_F$.
\begin{proposition}\label{pr:attr_xf}\longthmtitle{Convergence of $\drmpc$}
	Let Assumption~\ref{assump:safeset} hold. Then, for each iteration $j \ge 1$ of the DR-RC-Iterative-MPC procedure, the trajectory $(\xtraj^j,\utraj^j)$ generated by $\drmpc$ satisfies $x^j_t \to x_F$ as $t \to \infty$. 
\end{proposition}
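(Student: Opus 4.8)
The plan is to mimic the standard Lyapunov-style argument from learning MPC \cite{UR-FB:17-tac}, using the cost-to-go of the finite-horizon problem as a decreasing certificate along the generated trajectory. Fix an iteration $j \ge 1$ and write $V_t := \mathcal{J}_{(\mathcal{S}^{j-1},\DD^{j-1})}(x_t^j)$ for the optimal value of~\eqref{eq:DR-RLMPC:main} solved at time $t$ with current state $x_t^j$; this is well-defined and finite for every $t \ge 0$ by the recursive feasibility established in Proposition~\ref{prop:t-t+1}. The first step is the descent inequality: using the shifted feasible solution constructed in the proof of Proposition~\ref{prop:t-t+1} (drop the first stage, append one step of the stored safe trajectory $\tilde{\xtraj}$ emanating from $x_{t+K|t}^{*,j}$), I would bound $V_{t+1}$ by the cost of that candidate and compare it termwise with $V_t$. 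Because $\overline{Q} = Q^{j-1}$ is a genuine minimum cost-to-go over $\mathcal{S}^{j-1}$ and the appended tail $[x_{t+K|t}^{*,j},\tilde{x}_{K+1},\dots,x_F]$ is one admissible trajectory contributing to that minimum, the terminal-cost bookkeeping collapses and yields
\begin{align*}
	V_{t+1} \le V_t - r(x_t^j, u_{t|t}^{*,j}).
\end{align*}
Hence $\{V_t\}_{t \ge 0}$ is non-increasing and bounded below by $0$, so it converges and $\sum_{t=0}^\infty r(x_t^j, u_{t|t}^{*,j}) \le V_0 < \infty$, which forces $r(x_t^j, u_{t|t}^{*,j}) \to 0$.

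The second step is to convert this into state convergence using the positive-definiteness assumption~\eqref{eq:st-cost} on the stage cost together with compactness. Since $\XX \times \UU$ is compact and $r$ is continuous and vanishes only at $(x_F, 0)$, a standard argument shows that $r(x_t^j, u_{t|t}^{*,j}) \to 0$ implies $(x_t^j, u_{t|t}^{*,j}) \to (x_F, 0)$: if not, some subsequence stays outside a neighborhood of $(x_F,0)$, and by compactness it has a convergent sub-subsequence with limit $(\bar{x},\bar{u}) \ne (x_F,0)$, whence $r(\bar x, \bar u) > 0$ by continuity, contradicting $r \to 0$. In particular $x_t^j \to x_F$, which is the claim. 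I would state this compactness-continuity fact as a small lemma or inline it, since it is the only place the full strength of~\eqref{eq:st-cost} is used.

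The main obstacle is making the descent inequality fully rigorous about the terminal cost term. Concretely, one must check that $\overline{Q}(x_{t+1+K|t}^{\text{cand}})$ — the terminal cost of the shifted candidate, evaluated at $\tilde{x}_{K+1}$ — is no larger than $r(x_{t+K|t}^{*,j}, \tilde u_K) + \overline{Q}$ evaluated along the original tail, so that the sum telescopes to exactly $-r(x_t^j,u_{t|t}^{*,j})$. This requires that the tail trajectory $\tilde{\xtraj}$ is chosen to realize $Q^{j-1}(x_{t+K|t}^{*,j})$ (it is the minimizing stored sub-trajectory), and that all of $\tilde{x}_{K+1},\dots,x_F$ lie in $\Pis{\mathcal{S}^{j-1}}$ with the correct cost-to-go values recorded — both guaranteed by the construction of $\mathcal{S}^{j-1}$ and $Q^{j-1}$ in~\eqref{eq:Qj}–\eqref{def:Fj}, but worth spelling out. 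A secondary subtlety is that the horizon $K$ is fixed while the stored tail may be shorter than $K$; this is handled exactly as in \cite{UR-FB:17-tac} by padding with the equilibrium $(x_F,0)$, which is safe since $x_F$ robustly satisfies the constraints (Assumption~\ref{assump:safeset}) and $r(x_F,0)=0$, so the padding contributes nothing to the cost. Once these bookkeeping points are settled, the argument is routine.
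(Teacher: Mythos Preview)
Your proposal is correct and follows essentially the same route as the paper: both use the optimal value $\mathcal{J}_{(\mathcal{S}^{j-1},\DD^{j-1})}(\cdot)$ as a Lyapunov-type certificate, establish the one-step descent $V_{t+1}\le V_t - r(x_t^j,u_{t|t}^{*,j})$ via the shifted candidate and the $Q^{j-1}$ bookkeeping you describe, and then extract convergence from positive definiteness of $r$ and compactness. The only cosmetic difference is that the paper packages the final step into an auxiliary attractivity result (Proposition~\ref{prop:xbar_conv}) rather than summing the inequalities and arguing $r\to 0$ inline as you do; your version is arguably a bit cleaner since it avoids having to identify a continuous $\phi(x)$ depending on $x$ alone.
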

\begin{proof}
	To show the result we will make use of Proposition~\ref{prop:xbar_conv}. Note that the trajectory belongs to the compact set $\XX$ and is generated by the discrete-time system $x^j_{t+1} = f(x_t^{j}, u^{*,j}_{\mathrm{vec},t})$, where $u^{*,j}_{\mathrm{vec},t}$ is the first component of the control input sequence obtained by solving the finite-horizon optimal control problem~\eqref{eq:DR-RLMPC:main} with $x = x_t^j$ (see~\eqref{eq:optSol-j} for more details). Note that the control input $u^{*,j}_{\mathrm{vec},t}$ is a function of the state $x_t^j$ only and so, the dynamics can be implicitly written in the form~\eqref{eq:gen-sys}. Next, consider the function $\map{\JJ_{(\mathcal{S}^{j-1},\DD^{j-1})}}{\XX}{\realnonnegative}$ obtained by replacing $(\overline{\mathcal{S}},\DDo)$ with $(\mathcal{S}^{j-1},\DD^{j-1})$ in the definition of $\JJ_{(\overline{\mathcal{S}},\DDo)}$ given in~\eqref{eq:DR-RLMPC:main}. For brevity, we use the shorthand $\JJ^{j-1}$. Our aim is to show that $\JJ^{j-1}$ acts as the Lyapunov candidate $V$. First note that $\JJ^{j-1}(x_F) = 0$ as one can apply zero input at the equilibrium and stay there, thus accumulating no cost. For any $x \not = x_F$, we have $\JJ^{j-1}(x) \ge \min_{u \in \UU} r(x,u) > 0$. Thus, in our case $x_F$ acts as the point $x^*$ in Proposition~\ref{prop:xbar_conv}. To conclude the proof, we show that $\JJ^{j-1}$ satisfies~\eqref{eq:V-lyap}. Pick any $x \in \XX$ and let $(\xtraj^*,\utraj^*) \in \XX^{K+1} \times \UU^K$ be the optimal trajectory obtained by solving the finite-horizon problem~\eqref{eq:DR-RLMPC:main} with the constraint that the first state in the trajectory is $x^*_1 = x$. We have
	\begin{align*}
		\JJ^{j-1}(x_1^*) = r(x^*_1,u^*_1) + \sum_{k=2}^K r(x^*_k,u^*_k) + Q^{j-1}(x^*_{K+1}).
	\end{align*}
    By definition of $Q^{j-1}$, there exists a trajectory $j^* \in \until{j-1}$ and a time $t^*$ such that $x^*_{K+1} = x^{j^*}_{t^*}$ and 
    	$Q^{j-1}(x^*_{K+1}) = \sum_{t=t^*}^\infty r(x^{j^*}_t,u^{j^*}_t)$.
	Substituting this expression in the above relation for $\JJ^{j-1}$, gives 
	\begin{align}
		\JJ^{j-1}(x_1^*) & = r(x^*_1,u^*_1) + \sum_{k=2}^K r(x^*_k,u^*_k) + \sum_{t=t^*}^\infty r(x^{j^*}_t,u^{j^*}_t) \notag
		\\
		& = r(x^*_1,u^*_1) + \textstyle \sum_{k=2}^K r(x^*_k,u^*_k) + r(x^{j^*}_{t^*},u^{j^*}_{t^*}) \notag
		\\
		& \qquad \qquad + \textstyle \sum_{t=t^*+1}^\infty r(x^{j^*}_t,u^{j^*}_t) \label{eq:J-ineq-2}
	\end{align}
	Note that $x_{t^*+1}^{j^*}$ belongs to the set $\mathcal{S}^{j-1}$ and so 
	\begin{align}\label{eq:Q-bound}
		Q^{j-1}(x_{t^*+1}^{j^*}) \le \textstyle \sum_{t=t^*+1}^\infty r(x^{j^*}_t,u^{j^*}_t),
	\end{align}
	and the finite-horizon trajectory $\xtraj^+ = [x^*_{2}, \dots, x^*_{K+1}, x^{j^*}_{t^*+1}]$, $\utraj^+ = [u^*_{2}, \dots, u^{j^*}_{t^*}]$ is a feasible solution for~\eqref{eq:DR-RLMPC:main} with ${x = x_2^*}$. 
	Using optimality, we have
	\begin{align*}
		& \JJ^{j-1}(x^*_2) \le \sum_{k=2}^K r(x^*_k,u^*_k) + r(x^*_{K+1},u^{j^*}_{t^*}) + Q^{j-1}(x^{j^*}_{t^* +1})
		\\
		& \quad \le \sum_{k=2}^K r(x^*_k,u^*_k) + r(x^{j^*}_{t^*},u^{j^*}_{t^*}) + \sum_{t=t^*+1}^\infty r(x^{j^*}_t,u^{j^*}_t),
	\end{align*}
	where we used~\eqref{eq:Q-bound} in the above inequality. 
	Using the above condition in~\eqref{eq:J-ineq-2} yields 
		$\JJ^{j-1}(x_1^*) \ge \JJ^{j-1}(x^*_2) + r(x^*_1,u^*_1)$.
	Since $r$ is a continuous function satisfying~\eqref{eq:st-cost}, this establishes the inequality~\eqref{eq:V-lyap} and so completes the proof. 
\end{proof}

Previous results were aimed at the guarantees that each iteration of our Algorithm enjoys. Next, we examine the performance of our algorithm across iterations.
	
	\begin{proposition}\longthmtitle{Guarantee across iterations for DR-RC-Iterative-MPC}\label{pr:across-iter}
		For the DR-RC-Iterative-MPC procedure, if $\DD^{j} \subset \DD^{j-1}$ for some iteration $j \ge 1$, then we have 
		\begin{align}\label{eq:SS-over-iter}
			\mathcal{S}^j = \mathcal{S}^{j-1} \cup \setr{(j,x_t^{j},\costgo{t}{\infty}^{j})}_{t=1}^{T_j}.
		\end{align}
	As a consequence, $\mathcal{S}^{j-1}\subseteq\mathcal{S}^j$. In addition, if the function $\JJ_{(\mathcal{S}^{j-1},\DD^{j-1})}$ is continuous at $x_F$, then   $\costgo{0}{\infty}^j \le \costgo{0}{\infty}^{j-1}$. 
	\end{proposition}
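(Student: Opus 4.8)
The plan is to prove the two conclusions in sequence: first the set identity~\eqref{eq:SS-over-iter}, and then the cost monotonicity $\costgo{0}{\infty}^j \le \costgo{0}{\infty}^{j-1}$. For the set identity, the key observation is that the hypothesis $\DD^j \subset \DD^{j-1}$ makes the DR risk constraint in iteration $j+1$ \emph{weaker} than (implied by) the DR risk constraint in iteration $j$. Concretely, for any state $x$, if $\sup_{\mu \in \DD^{j-1}} \CVaR_\beta^\mu[g(x,w)] \le \delta$, then since $\DD^j \subseteq \DD^{j-1}$ the supremum over the smaller set also satisfies $\sup_{\mu \in \DD^{j}} \CVaR_\beta^\mu[g(x,w)] \le \delta$. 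Now every state $x$ appearing in $\mathcal{S}^{j-1}$ (before the Line~\ref{ln:safe} augmentation) belongs to a trajectory that was certified $(\zeta,\Pb^{\abs{\Data^{j-1}}})$-safe, which in the algorithm means it survived the previous unsafe-set filtering and hence satisfies the DR constraint with ambiguity set $\DD^{j-1}$; likewise every state $x_t^j$ of the newly added $\jth$ trajectory satisfies the DR constraint with $\DD^{j-1}$ by construction of the $\drmpc$ routine (the finite-horizon problem~\eqref{eq:DR-RLMPC:main} is solved with $\DDo = \DD^{j-1}$). Therefore no index $i \in \II^{j-1} \cup \{j\}$ is placed into $\Unsafe^j$ in Line~\ref{ln:uns-traj}, so $\II^j = \II^{j-1} \cup \{j\}$ and Line~\ref{ln:ss-update} keeps all states; this is exactly~\eqref{eq:SS-over-iter}, and the inclusion $\mathcal{S}^{j-1} \subseteq \mathcal{S}^j$ is immediate.

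For the cost inequality, the strategy is to exhibit the previous iteration's trajectory (suitably truncated at the horizon and completed by a path already stored in the safe set) as a feasible candidate for the first finite-horizon problem of iteration $j$, and then telescope. By Proposition~\ref{prop:t-t+1}, iteration $j$ is feasible; let $\costgompc{0}{\cdot}$ denote the optimal value $\JJ^{j-1}(x_t^j)$ of~\eqref{eq:DR-RLMPC:main} along the generated trajectory. A standard MPC argument, mirroring the Lyapunov computation already carried out in the proof of Proposition~\ref{pr:attr_xf}, gives $\JJ^{j-1}(x_{t+1}^j) \le \JJ^{j-1}(x_t^j) - r(x_t^j, u_t^j)$ for every $t$; summing over $t = 0, \dots, T_j - 1$ and using that $x_{T_j}^j = x_F$ with $\JJ^{j-1}(x_F) = 0$ (this is where continuity of $\JJ_{(\mathcal{S}^{j-1},\DD^{j-1})}$ at $x_F$ is invoked, to ensure the value genuinely vanishes there and the telescoped remainder goes to zero) yields $\costgo{0}{\infty}^j = \sum_{t=0}^{T_j-1} r(x_t^j,u_t^j) \le \JJ^{j-1}(x_S)$. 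Finally, because $\mathcal{S}^{j-1}$ contains the entire $(j-1)$th trajectory $\xtraj^{j-1}$ (by Assumption~\ref{assump:safeset} for $j=1$, and by the first part of this proposition together with recursive application for $j \ge 2$, or directly from Proposition~\ref{prop:t-t+1}'s safe-set bookkeeping), the first $K$ states of $\xtraj^{j-1}$ followed by the stored tail form a feasible solution of~\eqref{eq:DR-RLMPC:main} at $x = x_S$, whose objective value is exactly $\costgo{0}{\infty}^{j-1}$; hence $\JJ^{j-1}(x_S) \le \costgo{0}{\infty}^{j-1}$, and chaining the two bounds gives the claim.

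The main obstacle I anticipate is the careful handling of two subtleties. First, I must make sure that ``$\xtraj^{j-1}$ is available in $\mathcal{S}^{j-1}$'' is actually licensed: the set-update mechanism discards trajectories that become unsafe, so I need the first part of the proposition (or, failing the hypothesis at earlier iterations, a separate argument using that $\DD^{j-1}$ was the ambiguity set used to \emph{generate} $\xtraj^{j-1}$ and hence its states satisfy the $\DD^{j-1}$-DR constraint, so they survive the Line~\ref{ln:uns-traj} filter that produced $\mathcal{S}^{j-1}$ from $\mathcal{S}^{j-2}$). In fact the cleanest route is: the states of $\xtraj^{j-1}$ were certified against $\DD^{j-2}$ during its generation and then re-filtered against $\DD^{j-1}$ when forming $\mathcal{S}^{j-1}$; since they survived that filter they lie in $\Pis{\mathcal{S}^{j-1}}$, so the horizon-$K$-prefix-plus-stored-tail construction is legitimate regardless of whether $\DD^{j-1} \subset \DD^{j-2}$. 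Second, the continuity hypothesis on $\JJ_{(\mathcal{S}^{j-1},\DD^{j-1})}$ at $x_F$ is needed precisely to rule out a positive ``jump'' in the value function at the target that would otherwise leave a gap in the telescoping sum; I would state explicitly that continuity plus $\JJ^{j-1}(x_F)=0$ forces the cost-to-go contributions beyond $T_j$ to vanish, closing the argument.
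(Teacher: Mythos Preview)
The paper explicitly omits the proof of this proposition for space reasons, so there is nothing to compare your proposal against. I can, however, evaluate the proposal on its own merits.

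Your argument for the set identity~\eqref{eq:SS-over-iter} is correct: every state stored in $\mathcal{S}^{j-1}$ already passed the $\DD^{j-1}$-filter in Line~\ref{ln:uns-traj} at the end of iteration $j-1$, and every state on $\xtraj^j$ satisfies the DR constraint with $\DD^{j-1}$ by construction of~\eqref{eq:DR-RLMPC:main}; the inclusion $\DD^j \subset \DD^{j-1}$ then forces $\Unsafe^j = \emptyset$, which is exactly~\eqref{eq:SS-over-iter}.

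For the cost inequality, your telescoping step $\costgo{0}{\infty}^{j} \le \JJ^{j-1}(x_S)$ is sound, and you correctly identify that continuity of $\JJ^{j-1}$ at $x_F$ is what lets the tail term vanish when the MPC trajectory only reaches $x_F$ asymptotically. The gap is in the final link $\JJ^{j-1}(x_S) \le \costgo{0}{\infty}^{j-1}$. You need the $(j-1)$th trajectory to sit inside $\Pis{\mathcal{S}^{j-1}}$ so that its $K$-step prefix is a feasible candidate in~\eqref{eq:DR-RLMPC:main}. But $\xtraj^{j-1}$ was generated under the DR constraint with ambiguity set $\DD^{j-2}$, whereas the filter that produced $\mathcal{S}^{j-1}$ uses $\DD^{j-1}$. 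Your sentence ``since they survived that filter they lie in $\Pis{\mathcal{S}^{j-1}}$'' simply asserts the conclusion; nothing in the single hypothesis $\DD^j \subset \DD^{j-1}$ guarantees $j-1 \notin \Unsafe^{j-1}$. If, for instance, $\DD^{j-1} \not\subset \DD^{j-2}$, the $(j-1)$th trajectory may well be discarded when $\mathcal{S}^{j-1}$ is formed, and then the bound $\JJ^{j-1}(x_S) \le \costgo{0}{\infty}^{j-1}$ has no evident justification. A clean fix is to either (a) strengthen the hypothesis to $\DD^i \subset \DD^{i-1}$ for all $i \le j$, so that the first part of the proposition applied at $i = j-1$ guarantees $\xtraj^{j-1} \subset \Pis{\mathcal{S}^{j-1}}$, or (b) add the standing assumption $j-1 \in \II^{j-1}$. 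Without one of these, the second claim does not follow from the stated hypothesis alone.
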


We omit the proof for space reasons. The assumption that $\DD^j \subset \DD^{j-1}$ is a difficult one to impose, in general. In future, we would like to explore scenarios where this can be ensured at least with high probability if not almost surely.
\section{Simulation}\label{sec:sims}

We demonstrate the performance of Algorithm~\ref{ag:DR_iteration} via a motion planning task for a mobile robot where the environment includes a randomly moving obstacle. As iterations progress, more data is collected and the safe set is gradually expanded. As a result, the cost-performance of the system improves while ensuring the required safety.
\subsubsection{Setup}
Consider the following model for a circular mobile robot navigating in a 2D environment:
\begin{align*}
    x_{t+1} & = \begin{bmatrix}
            1 & 0 & 1 & 0 \\
            0 & 1 & 0 & 1 \\
            0 & 0 & 1 & 0 \\
            0 & 0 & 0 & 1 
    \end{bmatrix}x_t + \begin{bmatrix}
        0 & 0 \\
        0 & 0 \\
        1 & 0 \\
        0 & 1 \\
    \end{bmatrix}u_t.
\end{align*}
Here, the state $x = [z,y,v_z,v_y]^\top$ contains the position $(z,y)$ of the center of mass of the robot and its velocity in $z$ and $y$ directions. The input $u = [a_z,a_y]^\top$ consists of the acceleration in $z$ and $y$ directions. The objective of this problem is to steer the agent from the initial point $x_S = [0, 0, 0, 0]^\top$ to the target point $x_F = [5, 3, 0, 0]^\top$ while constraining the risk of colliding with a square obstacle of length $\ell_\OO=0.4$ that moves randomly around the point $[2, 2]^\top$. Specifically, the position of the obstacle in each time-step is given by $o_t = \begin{bmatrix}2 & 2\end{bmatrix}^\top + \begin{bmatrix}\frac{1}{\sqrt{2}} & -\frac{1}{\sqrt{2}}\end{bmatrix}^\top w_t$,
where $w_t\in\real$ is the uncertainty. We assume that $w_t$ is defined by the $\text{Beta-binomial}(15, 10, 15)$ distribution supported on the set of fifteen points  
	$\setdef{-0.5 + i/14}{i \in \until{14}}$.
Since a small number of samples are usually available in practice, we start with only $N_0=5$ samples. We assume that in each time-step of each iteration, the obstacle's position is observable, which forms the dataset of samples.
Given position $o_t$, the region of the environment occupied by the obstacle is given as
\begin{align*}
	\OO_t = \setdef{(z,y) \in \real^2}{o_t-\tfrac{\ell_\OO}{2}  \ones_2 \le [z \, \, y]^\top  \le o_t + \tfrac{\ell_\OO}{2}  \ones_2},
\end{align*}
where $\ones_2 = [1, 1]^\top$. The stage cost is quadratic, given as ${r(x_t, u_t)=(x_F-x_t)^{\top}Q(x_F-x_t) + u_t^{\top}Ru_t}$, where ${Q=\text{diag}(1, 1, 0.01, 0.01)}$ and ${R=\text{diag}(0.01, 0.01)}$. Note that $r$ satisfies the condition~\eqref{eq:st-cost}. 
The safe set $\mathcal{S}^0$ and its respective terminal cost $Q^0$, that are required for initializing the first iteration of Algorithm~\ref{ag:DR_iteration}, are generated using an open-loop controller that drives the agent to the target while being far away from the obstacle, see Figure~\ref{fig:trajs}. We execute the algorithm for $20$ iterations.
 The prediction horizon is $K=5$. We use $\beta = 0.05$ as the risk-averseness coefficient and $\delta = 0.02$ as the right-hand side of the risk constraint.

We consider ambiguity sets defined using the total variation distance. For discrete distributions $P, Q \in \Delta_{\abs{\WW}}$ 
supported on a finite set $\WW$, the total variation distance between them is defined as $\delta(P, Q) = \frac{1}{2}\norm{P-Q}_1$. Given $N$ i.i.d samples $\setr{\what_1, \dots, \what_N}$ of the uncertainty, the empirical distribution is given by the vector $\widehat{\mathbb{P}}^{N} := (p_i^{N})_{i=1}^{\abs{\WW}}$, where $p_i^N = (\text{frequency of }w_i \in \WW \text{ in the dataset})/N$. Using this definition, we consider the ambiguity sets of the form 
	\begin{align}\label{eq:sim-ambiguity}
		\DD = \setdef{\mu \in \Delta_{\abs{\WW}}}{\delta(\mu, \widehat{\mathbb{P}}^{N})\le \theta},
	\end{align}
where $\theta \ge 0$ is the radius. Given a dataset and a confidence level $\zeta \in (0,1)$, one can tune the radius to obtain a reliability bound as~\eqref{eq:amb_def-n}. See~\cite{MS-PS-PP:2019} for further details. We run our experiments for different values of $\theta$ and check the safety and performance of our method.

According to the objective of avoiding collision in  this experiment, the constraint function $g$ is given as the distance between the agent and the safe region $\YY_t$ determined by excluding the instantaneous position of the obstacle from the environment. More precisely, $g(x, w) = \min_{a\in\YY}\norm{Cx-a}$, where $\YY := \real^2\textbackslash\OO$, the set $\OO$ is determined by the uncertainty $w$, and $C$ is chosen such that $Cx=\left[z, y\right]^\top$. Taking benefit of the square shape of the obstacle, we use the simplified representation of function $g$ provided in~\cite{AH-IY:20-ICRA} which is $g(x, w) = \min_{j\in[3]}\left\{\frac{\left[d_j+h_j^\top(Cx - w)\right]_+}{\norm{h_j}}\right\}$, where $h_j$ and $d_j$ represent the outward normal and the position of one of the constraints defining the obstacle set (see \cite[Lemma 1]{AH-IY:20-ICRA} for details).
Note that $g$, and so the distributionally robust constraint~\eqref{eq:DR-RLMPC:main} are non-convex, and non-trivial to handle. We use the following reformulation of the distributionally robust constraint for the ambiguity set~\eqref{eq:sim-ambiguity} to write the optimization~\eqref{eq:DR-RLMPC:main} in a finite-dimensional form:
\begin{align*}
	&\sup_{\mu\in \DD} \left[\CVaR_{\beta}^{\mu}\left[g(x,w)\right]\right] = 
	\\
	&\begin{cases} \nonumber\inf \quad 2\lambda \theta + \eta + \nu + \textstyle \sum_{\ell = 1}^L (\gamma_{1_\ell}-\gamma_{2_\ell})p_\ell^{N}
	\\
	 \text{s.t.}  \quad \beta( 
	\gamma_{1_\ell}-\gamma_{2_\ell}+\nu) \geq [g(x,w_\ell)-\eta]_+, \quad\forall\ell\in[L],\\
	  \quad \quad
	\gamma_{1_\ell}+\gamma_{2_\ell} \leq \lambda, \quad\forall\ell\in[L],\\
	 \quad \quad
	\eta, \nu\in\real, \quad \lambda, \gamma_{1_\ell}, \gamma_{2_\ell} \in \realnonnegative ,\quad\forall\ell\in[L],
	\end{cases}
\end{align*}
where $L=\abs{\WW}$. The above reformulation is different from the one given in~\cite{AH-IY:20-ICRA} as there ambiguity sets are defined using the Wasserstein metric. We have omitted the proof of the above relation due to space reasons. The optimization problem~\eqref{eq:DR-RLMPC:main}, considering the above-mentioned reformulation is implemented in GEKKO~\cite{LB-DH-RM-JH:2018} using APOPT solver.
\subsubsection{Results}
The resulted trajectories for different ambiguity set sizes are presented in Figure~\ref{fig:trajs}.
        \begin{figure*}
        \centering
        \begin{subfigure}[b]{0.32\linewidth}
            \centering
            \includegraphics[width=\linewidth]{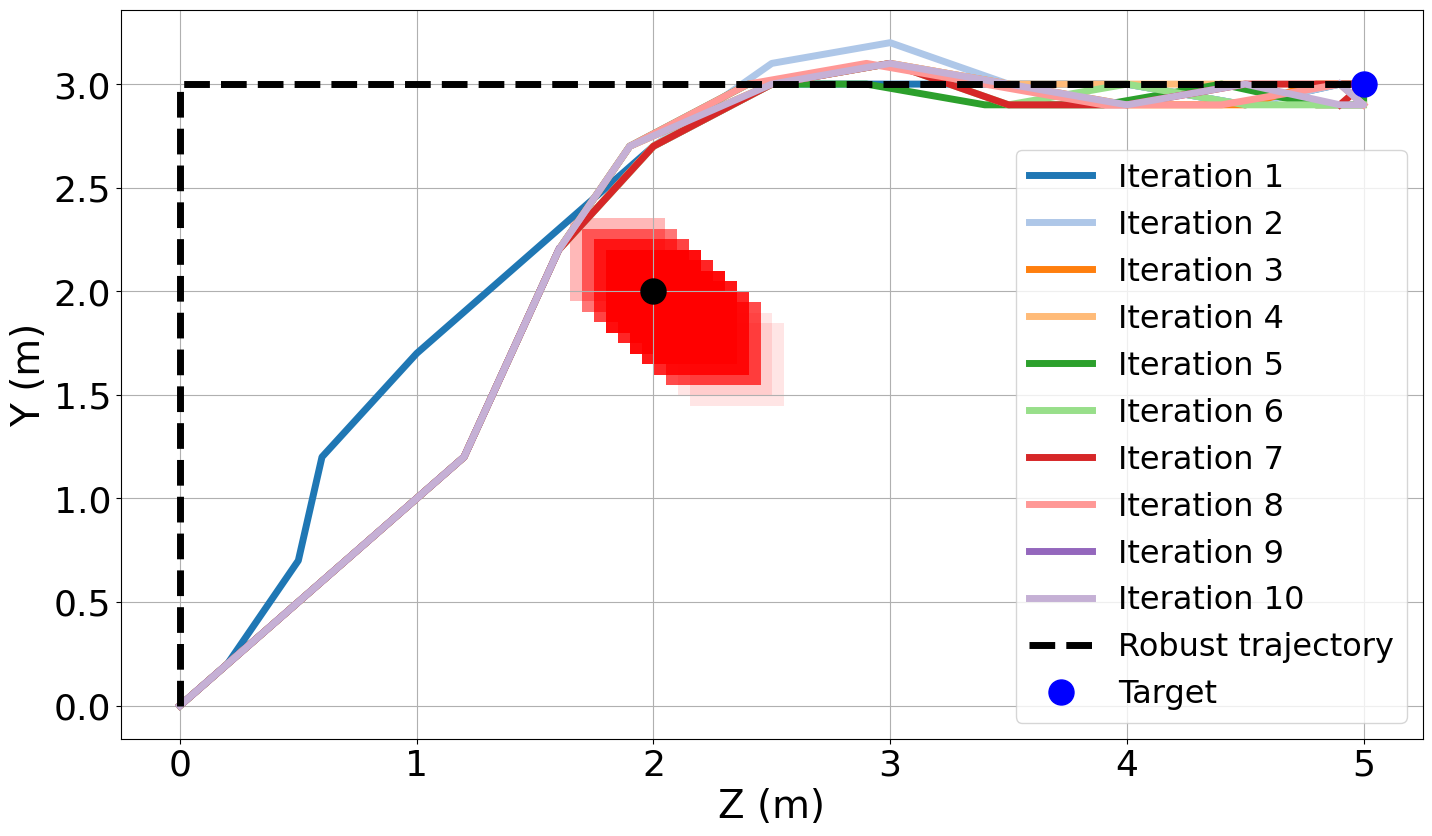}
            \caption[Network2]
            {{\small $\theta$ = $5\times10^{-6}$}}    
            \label{fig:trajs_a}
        \end{subfigure}
        \qquad
        \begin{subfigure}[b]{0.32\linewidth}  
            \centering 
            \includegraphics[width=\linewidth]{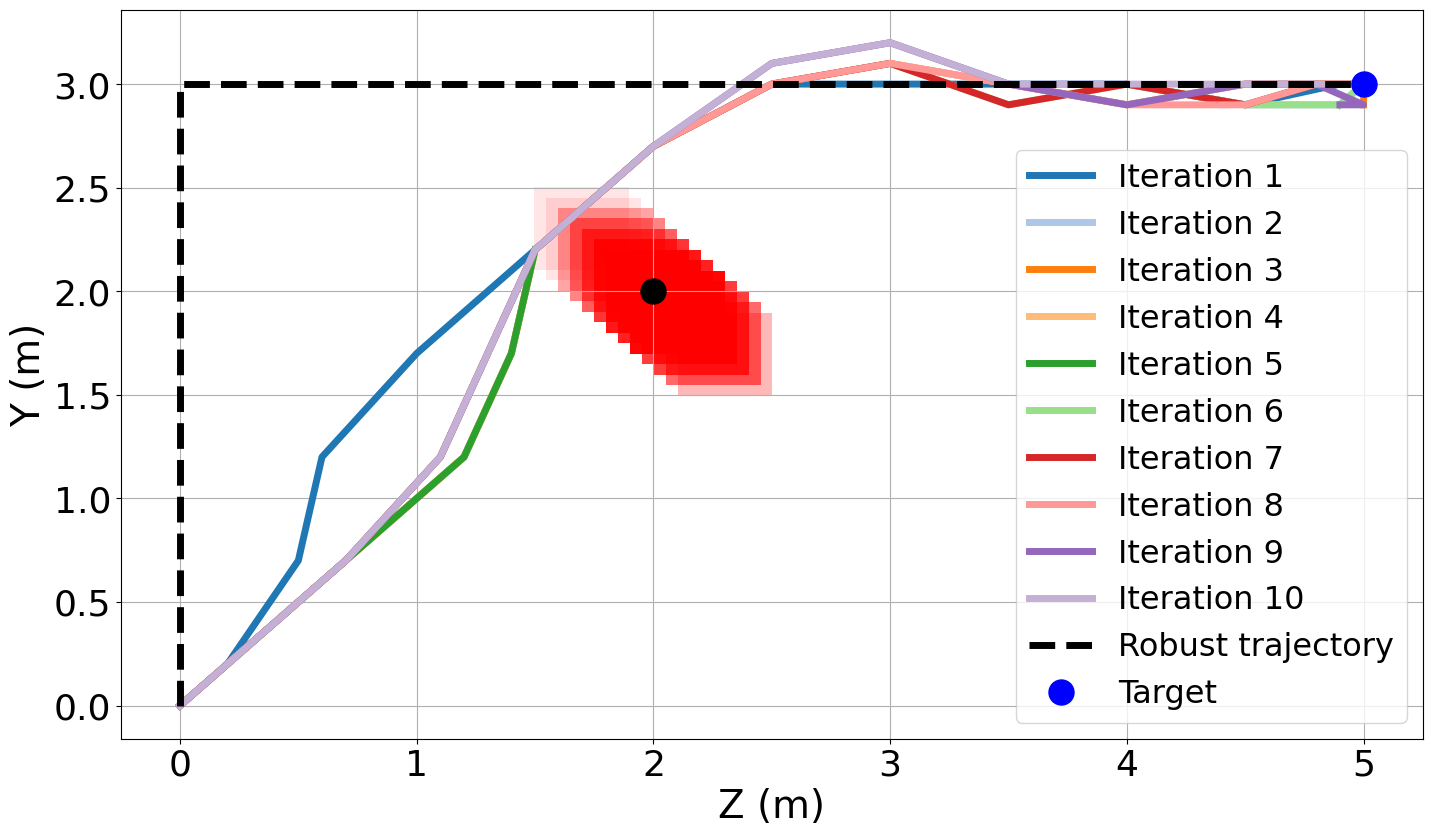}
            \caption[]
            {{\small $\theta$ = $5\times 10^{-4}$}}
            \label{fig:trajs_b}
        \end{subfigure}
        \vskip\baselineskip
        \begin{subfigure}[b]{0.32\linewidth}   
            \centering 
            \includegraphics[width=\linewidth]{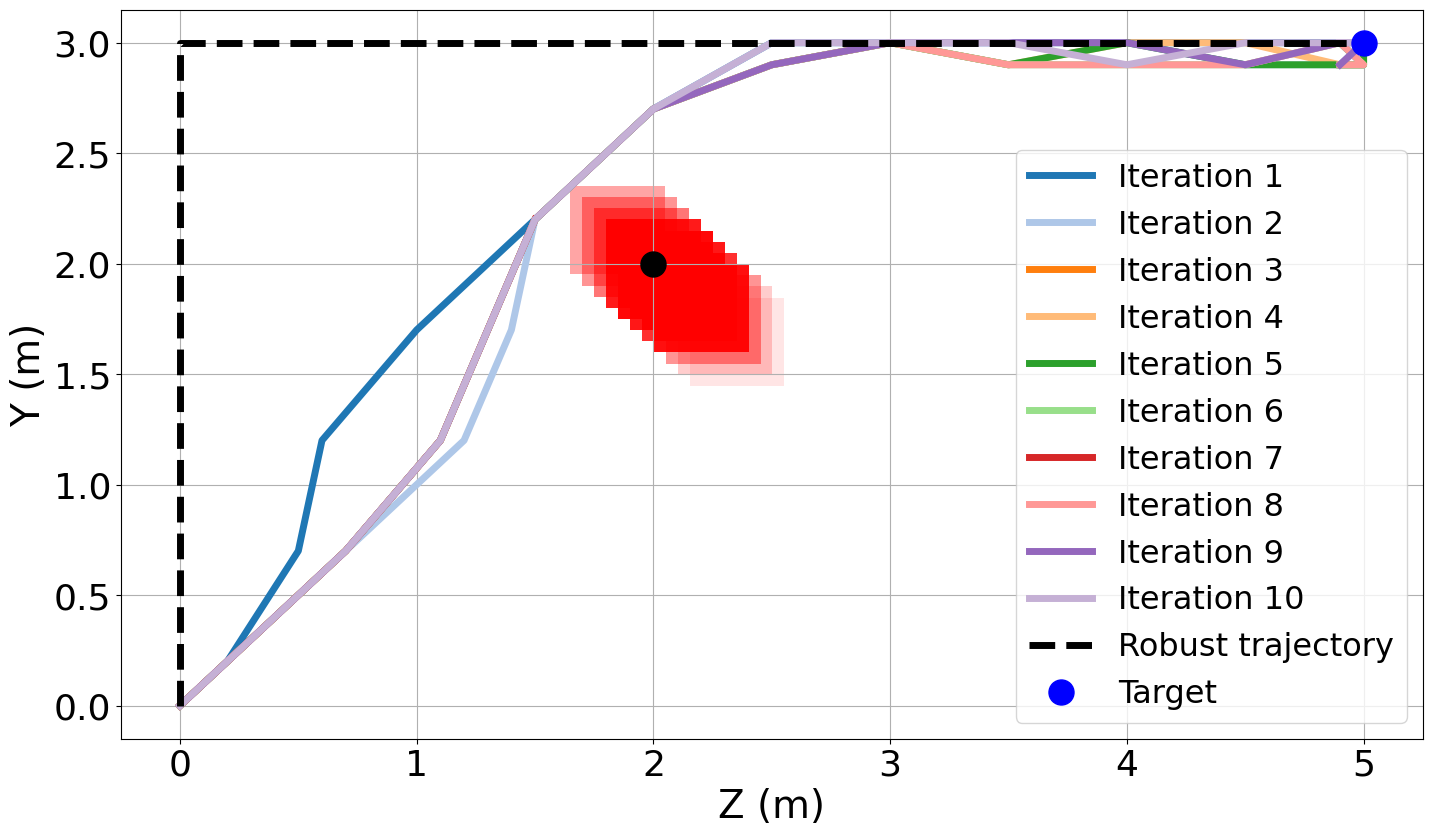}
            \caption[]
            {{\small $\theta$ = $5\times 10^{-2}$}}    
            \label{fig:trajs_c}
        \end{subfigure}
        \qquad
        \begin{subfigure}[b]{0.32\linewidth}   
            \centering 
            \includegraphics[width=\linewidth]{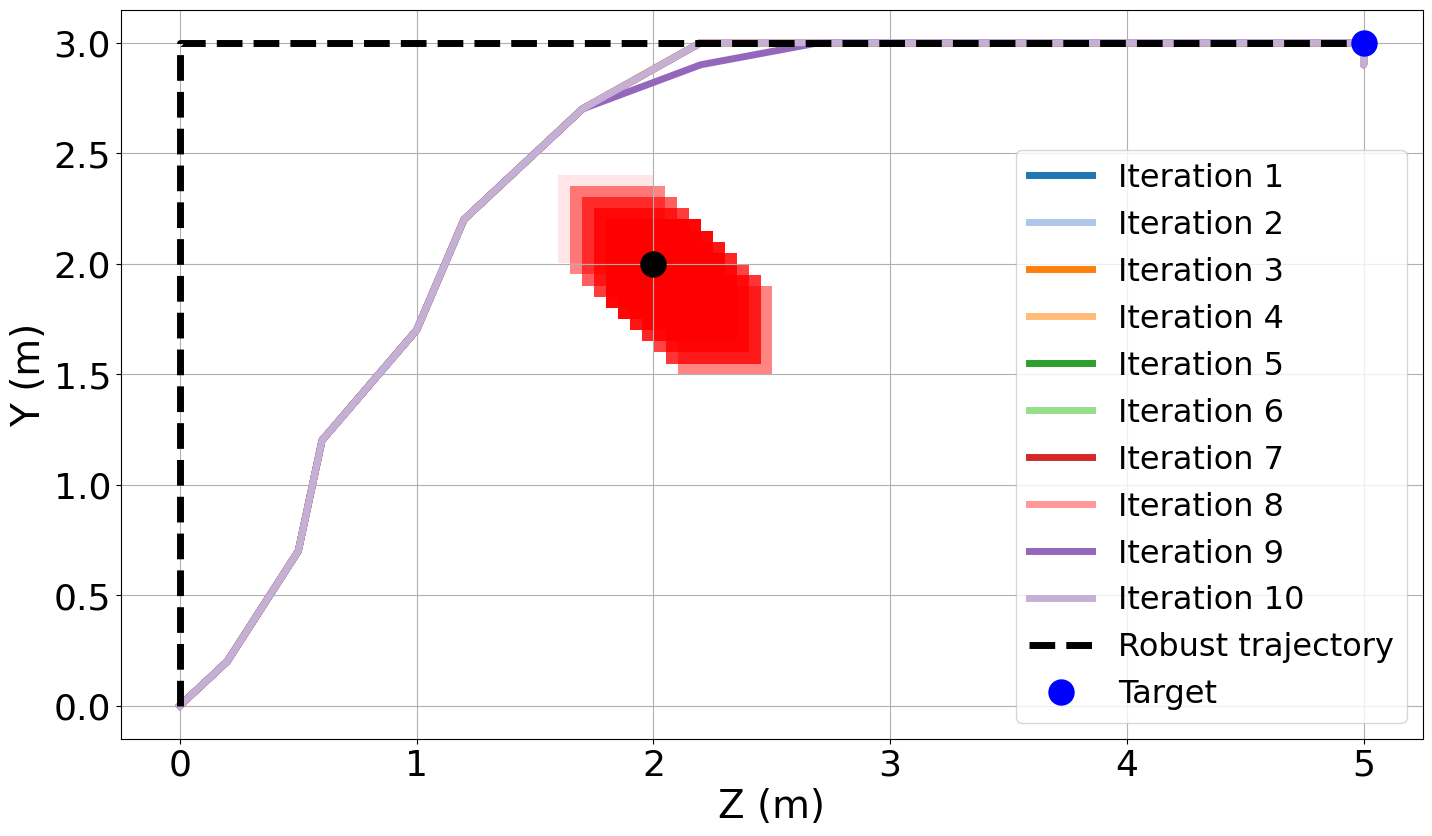}
            \caption[]
            {{\small $\theta$ = 0.5}}    
            \label{fig:trajs_d}
        \end{subfigure}
        \caption{\footnotesize Plots illustrating the application of the DR-RC-Iterative-MPC procedure for the task of navigating the mobile robot in an environment with an uncertain obstacle (see Section~\ref{sec:sims} for details). We consider four different radii for the ambiguity set and for each case, the radius does not change over the iterations. The robust trajectory (dashed black line) is the same for all cases. Each realization of the obstacle is plotted with a shaded red square. As observed, the trajectories become more conservative as the radius of the ambiguity set increases.} 
        \vspace*{-3ex}
        \label{fig:trajs}
    \end{figure*}
    In the first iteration, due to the constraint $x_{K}\in\Pis{\mathcal{S}^0}$, for different values of $\theta$, the system follows a similar trajectory. However, differences become more apparent as time-step progresses. For small ambiguity sets, the trajectories are closer to the obstacle. For larger ones, the algorithm becomes more conservative to the extent that for $\theta = 0.5$ the agent stops exploring and is only concerned about safety.  There is a noteworthy observation in Figure~\ref{fig:trajs_b}, that is, trajectories get closer to the obstacle in the first few iterations but as more data is collected, the safe set gets refined in 
    later iterations and the robot deviates from the obstacle more strongly. Finally in Figure~\ref{fig:collision} we underline the impact of the size of the ambiguity set cost-performance and safety. As shown in the figure, smaller ambiguity sets provide cost-efficient trajectories while they also increase the probability of colliding with the obstacle. 
    As a result, if an appropriate value is chosen for the radius of the ambiguity set, the algorithm is able to provide an acceptable level of safety even using small number of samples. 
\begin{figure}
    \centering
    \includegraphics[width=0.75\linewidth]{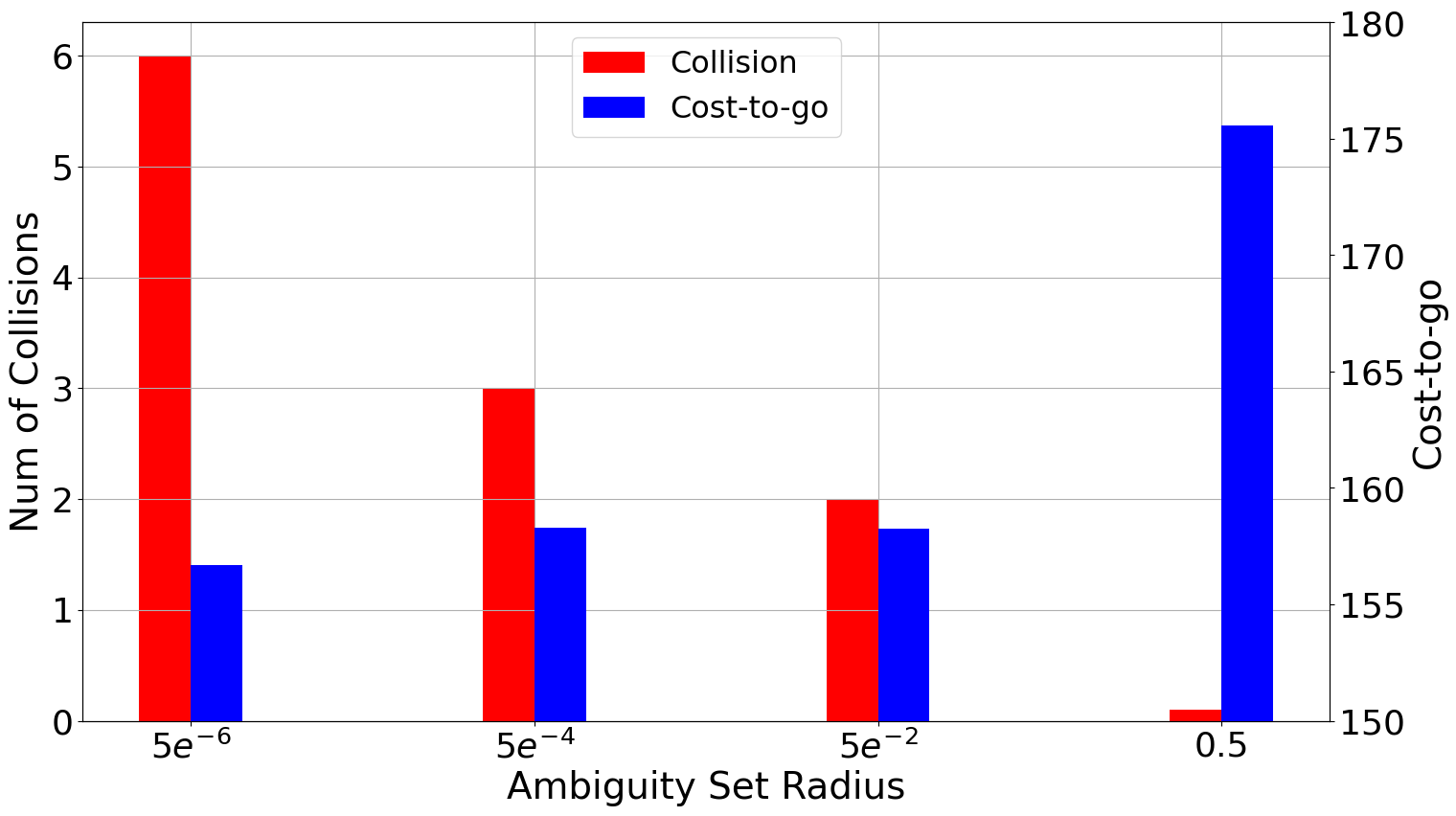}
    \caption{\footnotesize The effect of the size of the ambiguity set on safety and performance. The red block represents the number of iterations (out of $20$) in which the trajectory collides with the obstacle at least once. The blue block depicts the iteration cost of the collision-free iteration that has the highest index.}
    \vspace*{-3ex}
    \label{fig:collision}
\end{figure}

\section{Conclusions}

We considered a risk-constrained infinite-horizon optimal control problem and designed an iterative MPC-based scheme to solve it. Our procedure approximated the risk constraints using their data-driven distributionally robust counterparts. Each iteration in our method generated a trajectory that is provably safe and that converges to the equilibrium asymptotically. Lastly, we implemented our algorithm to find a risk-averse path for a mobile robot that is in an environment with uncertain obstacle. Several ideas need to be explored in the future. First, we wish to determine conditions under which the MPC scheme in each iteration converges to the equilibrium in finite time. Second, we aim to analyze the convergence of the iterative procedure. Specifically, to answer how one should adapt the ambiguity sets such that the iterations converge to the optimal solution of the infinite-horizon problem. Third, with online implementation as goal, we plan to explore the computational and statistical guarantees that various ambiguity sets have.

\bibliographystyle{ieeetr}

\appendix
\renewcommand{\theequation}{A.\arabic{equation}}
\renewcommand{\thetheorem}{A.\arabic{theorem}}
\renewcommand{\theproposition}{A.\arabic{proposition}}

The following result on attractivity of the equilibrium point of a discrete-time system aids us in showing convergence of our iterative $\drmpc$ scheme. The proof follows standard Lyapunov arguments but the exact result is not available in the literature. We provide the proof here for completeness.

\begin{proposition}\label{prop:xbar_conv}\longthmtitle{Attractivity of discrete-time system}
	Consider the system
	\begin{align}\label{eq:gen-sys}
		x_{t+1} = f(x_t), \quad x_0 \in \XX \subset \real^n,
	\end{align}
	where $\map{f}{\XX}{\XX}$ and $\XX$ is a compact set. Given a point $x^* \in \XX$, let the function $\map{V}{\XX}{\realnonnegative}$ satisfy 
	\begin{align}
		V(x^*) = 0, \quad V(x) > 0 \quad \forall x \in \XX \setminus \{x^*\}.
	\end{align}
	Assume there exists a continuous function $\map{\phi}{\XX}{\realnonnegative}$ such that $\phi(x^*) = 0$, $\phi(x) > 0$ for all $x \in \XX\setminus \{x^*\}$, and 
	\begin{align}\label{eq:V-lyap}
		V(f(x))-V(x)\leq-\phi(x) \quad \text{ for all } x \in \XX.
	\end{align}
	Then, any trajectory $\{x_t\}$ of~\eqref{eq:gen-sys} satisfies $\lim_{t \to \infty} x_t = x^*$. 
\end{proposition}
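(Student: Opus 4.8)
The plan is to run a standard Lyapunov/LaSalle-type argument, but routed entirely through $\phi$ rather than $V$, since $V$ is not assumed continuous. First I would evaluate the descent inequality~\eqref{eq:V-lyap} along the trajectory: $V(x_{t+1}) - V(x_t) \le -\phi(x_t) \le 0$, so $\{V(x_t)\}$ is nonincreasing and bounded below by $0$, hence convergent. Telescoping this from $t=0$ to $t=T-1$ gives $\sum_{t=0}^{T-1}\phi(x_t) \le V(x_0) - V(x_T) \le V(x_0)$ for every $T$, so the nonnegative series $\sum_{t=0}^{\infty}\phi(x_t)$ converges, and in particular $\phi(x_t) \to 0$ as $t \to \infty$.

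Next I would exploit compactness of $\XX$. The trajectory $\{x_t\}$ lies in $\XX$, so it has convergent subsequences. If $x_{t_k} \to \bar x$ along some subsequence, then by continuity of $\phi$ we get $\phi(\bar x) = \lim_k \phi(x_{t_k}) = 0$, and since $\phi$ is assumed to vanish only at $x^*$, this forces $\bar x = x^*$. Hence every convergent subsequence of $\{x_t\}$ has limit $x^*$.

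Finally I would upgrade ``every convergent subsequence converges to $x^*$'' to ``$x_t \to x^*$'' by the usual contradiction: if $x_t \not\to x^*$, then there exist $\eps > 0$ and a subsequence with $\norm{x_{t_k} - x^*} \ge \eps$ for all $k$; by Bolzano--Weierstrass in the compact set $\XX$, this subsequence has a further subsequence converging to some $\bar x$ with $\norm{\bar x - x^*} \ge \eps$, so $\bar x \ne x^*$, contradicting the previous step. This yields $\lim_{t\to\infty} x_t = x^*$.

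The argument is essentially routine; the one point that needs care is that $V$ is only assumed nonnegative (not continuous, and $f$ is not assumed continuous either), so one must avoid taking limits of $V(x_{t_k})$ and instead extract all asymptotic information from the summability of $\phi(x_t)$ together with the continuity and positivity of $\phi$. Beyond recognizing this, I do not expect any real obstacle.
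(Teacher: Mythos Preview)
Your argument is correct. The paper's proof follows the same Lyapunov/compactness skeleton but reaches the contradiction by a slightly different route: it assumes a subsequence $x_{t_k}\to\bar x\neq x^*$, takes a closed neighborhood $\NN_\eps$ of $\bar x$ not containing $x^*$, sets $\bar\phi:=\min_{x\in\NN_\eps}\phi(x)>0$, and then uses monotonicity of $\{V(x_t)\}$ to get $V(x_{t_{k+1}})\le V(x_{t_k})-\bar\phi$ for all large $k$, which forces $V$ to $-\infty$. Your version instead telescopes to obtain $\sum_t\phi(x_t)<\infty$, hence $\phi(x_t)\to 0$, and then reads off all cluster points directly from continuity of $\phi$. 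Both arguments use exactly the same hypotheses (nonnegativity of $V$, continuity and strict positivity of $\phi$ away from $x^*$, compactness of $\XX$) and neither needs continuity of $V$ or $f$; your route has the minor advantage of producing the intermediate fact $\phi(x_t)\to 0$ explicitly, while the paper's route avoids the telescoping step by working with a uniform lower bound on $\phi$ near the cluster point. They are equivalent packagings of the same idea.
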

\begin{proof}
	By contradiction, assume that there exists a trajectory $\{x_t\}_{t=0}^\infty$ of~\eqref{eq:gen-sys} such that $\lim_{t \to \infty} x_t \not = x^*$. Using this fact and the compactness of $\XX$, there exists a subsequence of $\{x_t\}_{t=0}^\infty$, denoted as $\{x_{t_k}\}_{k=0}^\infty$, such that $\lim_{k\rightarrow\infty}x_{t_k}=\bar{x}$ and $\bar{x}\neq x^*$. 
Let $\epsilon>0$ be such that $\NN_\epsilon:=\setdef{x \in \XX}{\norm{x-\bar{x}}\leq\epsilon}$ does not contain $x^*$. Let $\bar{\phi}:=\min\limits_{x\in\NN_\epsilon}\phi(x) > 0$. This is well defined as $\NN_\epsilon$ is compact and $\phi$ is continuous. Since $x_{t_k} \to \bar{x}$, there exists a $K$ such that $x_{t_k} \in \NN_\eps$ for all $k \ge K$. Using~\eqref{eq:V-lyap} and the definition of $\bar{\phi}$,  we have
\begin{align}
	V(x_{t_{k+1}})  \le V(x_{t_k}) - \phi(x_{t_k} ) 
 \le V(x_{t_k}) - \bar{\phi} \label{eq:lyap-inf-dec},
\end{align}
for all $k \ge K$. The sequence $\{V(x_t)\}_{t=1}^\infty$ is non-increasing due to~\eqref{eq:V-lyap}. This fact along with~\eqref{eq:lyap-inf-dec} and the lower bound on $V$ yields a contradiction. This completes the proof. 
\end{proof}
\end{document}